\documentclass[12pt]{article}

\usepackage[utf8]{inputenc}
\usepackage[T1]{fontenc}
\usepackage{amsmath, amssymb, amsthm}
\usepackage{mathtools}
\usepackage{geometry}
\usepackage{booktabs}
\usepackage{array}
\usepackage{graphicx}
\usepackage{xcolor}
\usepackage{hyperref}
\usepackage{enumerate}
\usepackage{tikz}

\usetikzlibrary{calc}

\definecolor{myteal}{HTML}{9FE1CB}
\definecolor{mytealline}{HTML}{0F6E56}
\definecolor{mycoral}{HTML}{F0997B}
\definecolor{mycoralline}{HTML}{993C1D}
\newtheorem{theorem}{Theorem}
\newtheorem{definition}[theorem]{Definition}
\newtheorem{corollary}[theorem]{Corollary}
\newtheorem{lemma}[theorem]{Lemma}

\newtheorem{postulate}[theorem]{Postulate}

\newtheorem{remark}[theorem]{Remark}

\newcommand{\Da}{D^{(\alpha)}}
\newcommand{\Xa}{X^{(\alpha)}}
\newcommand{\Lapa}{\Delta^{(\alpha)}}
\newcommand{\grada}{\operatorname{grad}^{(\alpha)}}
\newcommand{\diva}{\operatorname{div}^{(\alpha)}}
\newcommand{\curla}{\operatorname{curl}^{(\alpha)}}
\newcommand{\dbara}{\overline\partial^{(\alpha)}}
\newcommand{\dela}{\partial^{(\alpha)}}
\newcommand{\mua}{\mu^{(\alpha)}}
\newcommand{\laa}{\lambda^{(\alpha)}}
\newcommand{\nua}{\nu^{(\alpha)}}
\newcommand{\kaa}{\kappa^{(\alpha)}}
\newcommand{\caa}{c^{(\alpha)}}

\title{%
  \textbf{Quaternionic Elasticity Theory with Local Fractional Derivatives}
}
\small{
\author
{Hung Manh Nguyen$^{(1)\footnote{corresponding author}}$, Juan Bory-Reyes$^{(2)}$ }
\vskip 1truecm
\date{\small $^{(1)}$ {University of Transport and Communications, Hanoi, Vietnam}\\Email: hung.manh.nguyen@utc.edu.vn\\$^{(2)}$ {ESIME-Zacatenco-Instituto Polit\'ecnico Nacional. Ciudad de M\'exico, M\'exico}\\Email: juanboryreyes@yahoo.com }}

\begin{document}
\maketitle

\begin{abstract}
\noindent 
{We develop a quaternionic formulation of linear elasticity in which the classical derivative is replaced by Yang's local fractional derivative and the classical Cauchy--Riemann operator by a fractal Cauchy--Riemann operator. We formulate fractal kinematics, a constitutive law with a material length scale and the equilibrium equation, and show that, for displacement fields with polynomial components, the resulting homogeneous fractal Lam\'e system is equivalent to the monogenicity of an explicit quaternionic field. On the polynomial sector, that is, for finite combinations of normalized fractal monomials, we prove a fractal Kolosov--Muskhelishvili representation of the displacement field in terms of two monogenic quaternionic potentials, together with its completeness, by a direct construction that uses neither the Galerkin nor the Papkovich--Neuber potentials. The position term of the classical formula has to be replaced by a corrected position operator, which satisfies the exact Weyl relation with the partial local fractional derivatives. Finally a fractal Papkovich-Neuber representation is derived.}
\end{abstract}

\noindent
\textbf{Keywords:} linear elasticity; local fractional derivative;
quaternionic analysis; monogenic functions; Lamé system;
Kolosov–Muskhelishvili representation.\\
\textbf{AMS Subject Classification (2020):} 74B05; 26A33; 30G35.

\section{Introduction}
\label{sec:intro}

Holomorphic function theory has long been an effective tool for
linear elasticity in the plane: the displacement and stress fields
are represented in terms of a holomorphic and an anti-holomorphic
function, the classical Kolosov--Muskhelishvili formulae
\cite{Muskhelishvili1977}. In $\mathbb R^3$, quaternion analysis
provides the natural generalization, with monogenic functions (null solutions
of a generalized Cauchy--Riemann operator) taking over the
role of holomorphic ones. Bock and G\"urlebeck \cite{BockGurlebeck2009}
obtained a first generalized spatial Kolosov--Muskhelishvili formula this way, via a quaternionic Goursat-type representation of biharmonic functions;
Weisz-Patrault, Bock and G\"urlebeck \cite{WeiszPatraultBockGurlebeck2014}
and Bock \cite{Bock2014} developed the corresponding Papkovich--Neuber
theory and monogenic series expansions, building on Klein Obbink's
decomposition of {harmonic} functions \cite{KleinObbink1993}. A
recurring difficulty in this line of work is redundancy: quaternion-valued
monogenic functions live in $\mathbb H\cong\mathbb R^4$, one dimension
too many for a displacement field valued in $\mathbb R^3$, so a
nontrivial kernel has to be removed by hand when assembling a basis of
solutions. \'Alvarez-Pe\~na and Porter \cite{AlvarezPena2013,AlvarezPenaPorter2014}
{studied the underlying structure: a harmonic function with values in {the set of reduced quaternions} can be decomposed into a monogenic and an anti-monogenic function, but in general only with quaternion-valued components, so that the decomposition leaves the paravector space \cite{Bock2014}; the additional component is the source of the redundancy.} G\"urlebeck and Nguyen
\cite{GurlebeckNguyen2013,GurlebeckNguyen2014} and G\"urlebeck,
Nguyen and Legatiuk \cite{GurlebeckNguyenLegatiuk2014} {addressed this issue}
by introducing $\psi$-hyperholomorphic functions{, i.e., null solutions of
a generalized Cauchy-Riemann operator built from an arbitrary
structural set $\psi$} and showing that every paravector-valued harmonic
function decomposes into a monogenic, an
anti-monogenic, and a $\psi$-hyperholomorphic part. Bock, G\"urlebeck,
Legatiuk and Nguyen \cite{BockLegatiukNguyen2015} then used exactly
this three-way decomposition to build an alternative
Kolosov--Muskhelishvili formula for the displacement field that stays
entirely within $\mathbb R^3$, with a basis of solutions free of the
earlier redundancy; the same circle of ideas has since been coupled to
the finite element method for crack-propagation problems
\cite{Legatiuk2021}. It is this {powerful mathematical method}, and in particular the Kolosov--Muskhelishvili representation built from quaternionic monogenic functions, that the present paper sets out to {transfer} to a fractal setting.

A second, independent line of development makes that transplant
possible. Real materials are frequently non-smooth at the scale that
matters: porous rock, fractured concrete, biological tissue, and
many manufactured composites all show self-similar or fractal
microstructure over some range of scales, and classical continuum
mechanics {building on ordinary differentiability} does not see
this structure directly. Yang's \emph{local fractional derivative}
\cite{Yang2015,Yang2012book} was introduced
precisely to differentiate non-differentiable, fractal-supported
functions on Cantor-type sets, and has since been used to reformulate
a wide range of partial differential equations {such as} heat conduction,
wave propagation, diffusion directly on such sets \cite{RahmatBaleanuYang2015}, 
with a local fractional vector calculus (gradient, divergence, curl) developed more recently
to support field-theoretic applications \cite{VecCalc2025}. Within
this circle, Bory-Reyes and P\'erez-de la Rosa
\cite{BoryRosas2021} combined Yang's local fractional derivative with
quaternion (Moisil--Teodorescu) analysis, {defining a fractal Moisil--Teodorescu operator on 
Cantor-type cylindrical and spherical coordinate systems, factorizing the fractal Laplacian and 
applying it to the local fractional Helmholtz equation.}

Fractional and fractal generalizations of elasticity theory itself
have been pursued from several directions, largely independent of the
quaternionic viewpoint above: Carpinteri and Cornetti
\cite{CarpinteriCornetti2002} used fractional calculus to describe
stress and strain localization in fractal media; Sumelka and
B\l aszczyk \cite{SumelkaBlaszczyk2014} and Lazopoulos
\cite{Lazopoulos2019} developed fractional-strain continua for linear
elasticity; Balankin \cite{Balankin2015} proposed a continuum
framework for fractal materials based on a fractal metric; and
Patnaik, Sidhardh and Semperlotti \cite{PatnaikSidhardhSemperlotti2020,PatnaikSemperlotti2020}
built fractional-order plate and elastodynamic models for nonlocal,
attenuating media. These theories are motivated by the same physical
observation as above{: damage, porosity, and roughness at
small scales leave a fractal signature that a classical, smooth
continuum model cannot capture} with applications ranging from
rock and soil mechanics to the crack-propagation and damage problems
already touched on by the function-theoretic methods of
\cite{Legatiuk2021}. {To the best of our knowledge, none of these fractional-elasticity theories is built on the quaternionic representation-formula machinery of the first paragraph above. In the quaternionic direction, fractional {(or metric)} deformations of the Moisil--Teodorescu operator have been introduced, in particular by means of Stillinger's formalism \cite{BoryPerezPena2022}, {by metric differential calculus in a three-dimensional continum with a non-Euclidean metric \cite{BalankinBoryShapiro2016}} and of derivatives in non-integer dimensional space \cite{BoryNIDS2026}, the latter also for the quaternionic Lam\'e--Navier operator{. A solution-representation theory of Kolosov--Muskhelishvili type for fractal media does not appear to have been developed.}}

The present paper combines these two lines of work. {Using Yang's local fractional derivative and the fractal Cauchy--Riemann operator described above, we formulate a quaternionic theory of linear elasticity with local fractional derivatives, and prove a fractal analogue of the Kolosov--Muskhelishvili representation of the classical theory surveyed above. Our aim is to show that this representation, together with its completeness, survives fractalization when restricted to the polynomial sector, provided that the position term of the classical formula is replaced by a suitably corrected operator.}

{This work proceeds as follows}. Section~\ref{sec:prelim} develops
the fractal quaternionic calculus needed throughout: Yang's local
fractional derivative, the quaternion algebra and the fractal
Cauchy--Riemann operator. Section~\ref{sec:frac_elas} introduces fractal kinematics, the compatibility conditions, the constitutive law, and
the equilibrium equation as structured postulates and derives the fractal Lam\'e equation {together with its quaternionic factorization (Theorem~\ref{thm:Q}).} Section~\ref{sec:sol_rep} {deals with} the main results. {More in detail, we will first construct} a corrected {position operator} and establishes an exact Weyl relation, and then proves the fractal Kolosov--Muskhelishvili representation of the polynomial solutions in both directions by a direct construction. {An illustrative example is presented in Section~\ref{sec:examples}. Finally, conclusions of this work and further outlook are given in Section~\ref{sec:concl}}.

\section{Preliminaries}
\label{sec:prelim}

\subsection{Classical Elasticity}

For completeness and to fix notation, we briefly recall the classical linear elasticity equations in the smooth setting. Let $\mathbf u$ be the displacement field. {Next we define the infinitesimal strain tensor, which is defined as} 
\begin{equation}\label{eq:strain_classical}
  \varepsilon_{ij} = \frac12\left(\frac{\partial u_i}{\partial x_j}+\frac{\partial u_j}{\partial x_i}\right),
  \qquad
  \boldsymbol\varepsilon = \frac12\bigl(\nabla\mathbf u+(\nabla\mathbf u)^T\bigr).
\end{equation}
For a homogeneous, isotropic material, Hooke's law reads
\begin{equation}\label{eq:hooke_classical}
  \sigma_{ij}=\lambda\,\delta_{ij}\,\varepsilon_{kk}+2\mu\,\varepsilon_{ij}.
\end{equation}
Static equilibrium in the absence of inertia is given by
\begin{equation}\label{eq:equilibrium_classical}
  \sum_{j=1}^{3}\frac{\partial\sigma_{ij}}{\partial x_j}+f_i=0,
  \quad i=1,2,3,
  \qquad\text{i.e.}\qquad
  \nabla\cdot\boldsymbol\sigma + \mathbf f = \mathbf 0.
\end{equation}
Combining \eqref{eq:strain_classical}--\eqref{eq:equilibrium_classical} yields the Lam\'e--Navier equation
\begin{equation}\label{eq:lame_classical}
  \mu\,\Delta\mathbf u+(\lambda+\mu)\,\nabla(\nabla\cdot\mathbf u)+\mathbf f=\mathbf 0,
\end{equation}
{where $\mu > 0$, $\lambda > -\frac{2}{3}\mu$ are the so-called Lamé coefficients}.

\subsection{Fractal Quaternionic Calculus}
\label{subsec:toolkit}

{In this section, we go over some basic facts of fractal vector-calculus in Cartesian coordinates $x_0,x_1,x_2$ that will be useful in this work}: Yang's local fractional
derivative, the quaternion algebra, and a fractal Cauchy--Riemann operator {that is obtained by replacing classical partial derivatives with }Yang's local fractional derivative~\cite{Yang2012book}.

\begin{definition}[Local fractional derivative]
\label{def:D_alpha}
A function $f:\mathbb R\to\mathbb R$ is called \emph{local fractional continuous} at $x_0$ (in the sense of Yang) if for every $\varepsilon>0$ there exists $\delta>0$ such that
$$
|f(x)-f(x_0)| < \varepsilon^\alpha \quad\text{whenever}\quad |x-x_0|<\delta,
$$
with $0<\alpha\le 1$. We write $f\in C^\alpha(a,b)$ if $f$ is local fractional continuous at every point of $(a,b)$.

For $f\in C^\alpha(a,b)$, the local fractional derivative of order $\alpha\in(0,1]$ at $x_0\in(a,b)$ is defined by
$$
D^{(\alpha)}f(x_0)
:= \lim_{x\to x_0}\Gamma(1+\alpha)\,\frac{f(x)-f(x_0)}{(x-x_0)^\alpha}.
$$
For a function of several variables, partial local fractional derivatives are defined by applying this one-variable limit to each coordinate while keeping the others fixed.
\end{definition}

This fractional derivative is $\mathbb R$-linear: {for $c_1,c_2\in\mathbb R$ and functions $f,g$ that are local fractional differentiable on $(a,b)$,}
\begin{equation}
  D^{(\alpha)}[c_1f+c_2g] = c_1\,D^{(\alpha)}f+c_2\,D^{(\alpha)}g.
  \tag{L}\label{rule:lin}
\end{equation}
The generalized Mittag-Leffler, sine, cosine functions on a fractal set are {defined by}
$$
  \begin{gathered}
  E_\alpha(x^\alpha):=\sum_{k\ge0}\frac{x^{k\alpha}}{\Gamma(1+k\alpha)},
  \qquad
  \sin_\alpha(x^\alpha):=\sum_{k\ge0}\frac{(-1)^kx^{(2k+1)\alpha}}{\Gamma(1+(2k+1)\alpha)},\\[2pt]
  \cos_\alpha(x^\alpha):=\sum_{k\ge0}\frac{(-1)^kx^{2k\alpha}}{\Gamma(1+2k\alpha)}.
  \end{gathered}
$$
They satisfy
\begin{equation}
  D^{(\alpha)}\Big[m_k(x)\Big] = m_{k-1}(x),\quad\text{with}\quad m_k(x) = \dfrac{x^{k\alpha}}{\Gamma(1+k\alpha)}
  \tag{M}\label{rule:monomial}
\end{equation}
(interpreted as $0$ for $k=0$), and
\begin{equation}
  D^{(\alpha)}E_\alpha(x^\alpha) = E_\alpha(x^\alpha),
  \hspace{0.5cm}
  D^{(\alpha)}\sin_\alpha(x^\alpha) = \cos_\alpha(x^\alpha),
  \hspace{0.5cm}
  D^{(\alpha)}\cos_\alpha(x^\alpha) = -\sin_\alpha(x^\alpha).
  \tag{T}\label{rule:trig}
\end{equation}
{Let $C^\alpha\subset\mathbb R$ a Cantor-type subset of Hausdorff dimension $\alpha\in(0,1)$} (for the standard middle-third Cantor set, $\alpha=\ln2/\ln3\approx0.6309$), and consider functions of three variables $(x_0,x_1,x_2)$ with each $x_i\in C^\alpha$. The set $C^\alpha$ is equipped with the natural fractal measure $d^{(\alpha)}x=(dx)^\alpha/\Gamma(1+\alpha)$, and we denote by $D_i^{(\alpha)}$ the partial local fractional derivative with respect to $x_i$, i.e., $D^{(\alpha)}$ applied to $x_i$ with the other two coordinates fixed ($i=0,1,2$).

{Let $\mathbb H$ denote the non-commutative algebra of real quaternions}. An element $q\in\mathbb H$ is written as
\[
  q = \sum_{k=0}^3 q_k \mathbf e_k = q_0 + q_1\mathbf e_1 + q_2\mathbf e_2 + q_3\mathbf e_3,
  \qquad q_k\in\mathbb R,
\]
with $\mathbf e_0=1$ and imaginary units satisfying $\mathbf e_1^2=\mathbf e_2^2=\mathbf e_3^2=-1$ and
$\mathbf e_1\mathbf e_2=\mathbf e_3$, $\mathbf e_2\mathbf e_3=\mathbf e_1$, $\mathbf e_3\mathbf e_1=\mathbf e_2$.
Writing $q=q_0+\vec q$ with scalar part $\mathrm{Sc}(q)=q_0\in\mathbb R$ and vector part
$\mathrm{Vec}(q)=\vec q=q_1\mathbf e_1+q_2\mathbf e_2+q_3\mathbf e_3\in\mathbb R^3$,
the product of two quaternions $p,q$ takes the form
\[
  pq = p_0q_0 - \langle\vec p,\vec q\rangle + p_0\vec q + q_0\vec p + [\vec p,\vec q],
\]
where $\langle\cdot,\cdot\rangle$ is the Euclidean inner product in $\mathbb R^3$ and
$[\vec p,\vec q]=\vec p\times\vec q$ is the cross product. Thus $\mathbb H\cong\mathbb R\oplus\mathbb R^3$ as a direct sum of scalar and vector parts.

{The reduced quaternion set, denoted by $\mathcal A$, is defined as the subset of $\mathbb H$ generated by the basis elements $(1, e_1,e_2)$ as follows:} 
\[
  \mathcal A := \{q\in\mathbb H : q = q_0 + q_1\mathbf e_1 + q_2\mathbf e_2\}
  \cong \mathbb R\oplus\mathbb R^2,
\]
i.e., quaternions whose vector part lies in the $(\mathbf e_1,\mathbf e_2)$-plane. Note that $\mathcal A$ is only a real-linear subspace of $\mathbb H$, {it is not a subalgebra, since} $\mathbf e_1\mathbf e_2=\mathbf e_3\notin \mathcal A$. The product of two {reduced quaternions need not belong to $\mathcal A$.}

{Throughout the paper, the position variable and the displacement field are viewed as $\mathcal A$-valued via the identification $\mathcal A\cong \mathbb R^3$ and can be represented in terms of the basis as}
\begin{equation}\label{eq:A-embedding}
  x := x_0\mathbf e_0+x_1\mathbf e_1+x_2\mathbf e_2\in \mathcal A,
  \qquad
  \mathbf u := u_0\mathbf e_0+u_1\mathbf e_1+u_2\mathbf e_2\in \mathcal A.
\end{equation}

\begin{definition}[Fractal Cauchy--Riemann operator]
\label{def:DCR_classical}
Let $\Omega\subset\mathbb R^3$ and let $f:\Omega\to\mathbb H$ be a quaternion-valued function,
\[
  f(x) = \sum_{i=0}^3 f_i(x)\,\mathbf e_i,
  \qquad f_i:\Omega\to\mathbb R.
\]
The \emph{fractal Cauchy--Riemann operator} and its conjugate are defined by
\begin{equation}\label{eq:DCR}
  \overline\partial^{(\alpha)} f
  := \mathbf e_0 D_0^{(\alpha)} f + \mathbf e_1 D_1^{(\alpha)} f + \mathbf e_2 D_2^{(\alpha)} f,
  \qquad
  \partial^{(\alpha)} f
  := \mathbf e_0 D_0^{(\alpha)} f - \mathbf e_1 D_1^{(\alpha)} f - \mathbf e_2 D_2^{(\alpha)} f,
\end{equation}
where the partial local fractional derivatives act componentwise:
\[
  D_k^{(\alpha)}f := \sum_{i=0}^3 \bigl(D_k^{(\alpha)}f_i\bigr)\,\mathbf e_i,
  \qquad k=0,1,2.
\]
A function $f$ is {said to be} \emph{(left) fractal-monogenic} in $\Omega$ if
$\overline\partial^{(\alpha)} f = 0$ in $\Omega$.
\end{definition}

At $\alpha=1$, we have $D_i^{(1)}=\partial_{x_i}$ and
\[
  \overline\partial^{(1)} f
  = \mathbf e_0\,\partial_{x_0}f + \mathbf e_1\,\partial_{x_1}f + \mathbf e_2\,\partial_{x_2}f,
\]
which is the classical Cauchy--Riemann operator of quaternionic analysis in $\mathbb R^3$.

The fractal Laplacian on quaternion-valued {vector} fields is defined componentwise by
\[
  \Delta^{(\alpha)} f
  := \sum_{i=0}^3 \bigl(\Delta^{(\alpha)} f_i\bigr)\,\mathbf e_i,
  \quad\text{where}\quad
  \Delta^{(\alpha)} f_i
  := \sum_{k=0}^2 D_k^{(\alpha)} D_k^{(\alpha)} f_i.
\]
As in the classical case, mixed local fractional derivatives commute under the usual regularity assumptions: if $D_i^{(\alpha)}D_j^{(\alpha)}f$ and $D_j^{(\alpha)}D_i^{(\alpha)}f$ exist and are local fractional continuous in a neighborhood of $x$, then
\[
  D_i^{(\alpha)}D_j^{(\alpha)}f(x)=D_j^{(\alpha)}D_i^{(\alpha)}f(x),
\]
for $i\neq j\in\{0,1,2\}$. This is the local fractional analogue of Clairaut’s theorem; see \cite{Yang2012book} for the underlying definitions and related results. Consequently, the fractal Cauchy--Riemann operator factorizes the Laplacian:
\[
  \overline\partial^{(\alpha)}\partial^{(\alpha)} f
  = \partial^{(\alpha)}\overline\partial^{(\alpha)} f
  = \Delta^{(\alpha)} f
\]
for sufficiently regular {vector} fields $f$.

We denote by $\mathcal P^{(\alpha)}$ the $\mathbb R$-vector space of finite linear combinations of monomials
\[
  m_{p_0}(x_0)\,m_{p_1}(x_1)\,m_{p_2}(x_2),
  \qquad p_0,p_1,p_2\in\mathbb Z_{\ge 0},
\]
with real coefficients; we refer to its elements as \emph{fractal solid polynomials}.

For each $n\ge 0$, let $\mathcal P_n^{(\alpha)}\subset\mathcal P^{(\alpha)}$ be the subspace spanned by those monomials with $p_0+p_1+p_2=n$; we call these \emph{fractal-homogeneous of combined degree $n$}. Then
\[
  \mathcal P^{(\alpha)} = \bigoplus_{n\ge 0} \mathcal P_n^{(\alpha)},
\]
and the monomial rule \eqref{rule:monomial} shows that $D_i^{(\alpha)}$ maps $\mathcal P_n^{(\alpha)}$ into $\mathcal P_{n-1}^{(\alpha)}$, preserving the grading. {For a multi-index $p=(p_0,p_1,p_2)\in\mathbb Z_{\ge0}^3$ we write $|p|:=p_0+p_1+p_2$, $p!:=p_0!\,p_1!\,p_2!$ and
\[
  m_p(x):=m_{p_0}(x_0)\,m_{p_1}(x_1)\,m_{p_2}(x_2),
\]
and we denote by $\xi_i\in\mathbb Z_{\ge0}^3$ the $i$-th unit multi-index, $(\xi_i)_j=\delta_{ij}$, so that $p\pm\xi_i$ differs from $p$ only in the $i$-th entry, which is changed by $\pm1$. The monomials $m_p$ form a basis of $\mathcal P^{(\alpha)}$, and $\mathcal P_n^{(\alpha)}$ is spanned by those with $|p|=n$. In this notation, $D_i^{(\alpha)}m_p=m_{p-\xi_i}$, where $m_{p-\xi_i}:=0$ if $p_i=0$; thus $D_i^{(\alpha)}$ is a well-defined $\mathbb R$-linear operator on $\mathcal P^{(\alpha)}$. For $i\neq j$ we have $D_i^{(\alpha)}D_j^{(\alpha)}m_p=D_j^{(\alpha)}D_i^{(\alpha)}m_p=m_{p-\xi_i-\xi_j}$, with the same convention.}

The subspace of fractal-harmonic scalars of combined degree $n$ is
\[
  H_n^{(\alpha)} := \bigl\{ h\in\mathcal P_n^{(\alpha)} : \Delta^{(\alpha)} h = 0 \bigr\}.
\]
Both $\mathcal P_n^{(\alpha)}$ and $H_n^{(\alpha)}$ consist of real-valued functions.

In what follows we will also need vector-valued counterparts of these spaces. We write
$\mathcal P_{n,\mathcal A}^{(\alpha)}$ and $H_{n,\mathcal A}^{(\alpha)}$ for the $\mathcal A$-valued versions
of $\mathcal P_n^{(\alpha)}$ and $H_n^{(\alpha)}$, i.e., fields whose components in the basis
$\{\mathbf e_0,\mathbf e_1,\mathbf e_2\}$ belong to $\mathcal P_n^{(\alpha)}$ and $H_n^{(\alpha)}$, respectively.
Similarly, $\mathcal P_{\mathcal A}^{(\alpha)}$ and $H_{\mathcal A}^{(\alpha)}$ denote the corresponding ungraded spaces.
When convenient, we will also consider $\mathbb H$-valued polynomial fields, defined analogously.

{We will later in Section~\ref{sec:sol_rep} prove that} $H_n^{(\alpha)}$ has the same dimension as the classical space of solid harmonics of degree $n$, namely
\[
  \dim H_n^{(\alpha)} = 2n+1.
\]

For any $h_n^{(\alpha)}\in H_n^{(\alpha)}$, the function
\[
  z_n^{(\alpha)} := \partial^{(\alpha)} h_n^{(\alpha)}
\]
is fractal monogenic, i.e., $\overline\partial^{(\alpha)} z_n^{(\alpha)} = 0$, and reduces at $\alpha=1$ to the classical solid monogenic function associated with $h_n^{(1)}$.

As explicit monogenic polynomial examples in degree $n=1$, consider
\begin{equation*}
  z_{1,1}^{(\alpha)} := m_1(x_1) - m_1(x_0)\,\mathbf e_1,
  \qquad
  z_{1,2}^{(\alpha)} := m_1(x_2) - m_1(x_0)\,\mathbf e_2.
\end{equation*}
Both satisfy $\overline\partial^{(\alpha)} z_{1,k}^{(\alpha)} = 0$ and are the fractal transports of the classical ``complex-variable-like'' generators $z_k = x_k - x_0\mathbf e_k$.

Beyond the polynomial class, one also has explicit non-polynomial harmonic and monogenic functions. For instance,
\[
  h^{(\alpha)} = E_\alpha(x_0^\alpha)\cos_\alpha(x_1^\alpha),
  \qquad
  M^{(\alpha)} = E_\alpha(x_0^\alpha)\cos_\alpha(x_1^\alpha)\,\mathbf e_0
              + E_\alpha(x_0^\alpha)\sin_\alpha(x_1^\alpha)\,\mathbf e_1,
\]
satisfy $\Delta^{(\alpha)}h^{(\alpha)}=0$ and $\overline\partial^{(\alpha)} M^{(\alpha)}=0$ for every $\alpha\in(0,1]$, with $M^{(\alpha)} = \partial^{(\alpha)} h^{(\alpha)}$.

\section{The Fractal Theory of Elasticity}
\label{sec:frac_elas}

\subsection{Fractal Kinematics and Compatibility}

\begin{definition}[Fractal strain]\label{def:strain}
Let $\mathbf u = u_0\mathbf e_0 + u_1\mathbf e_1 + u_2\mathbf e_2$ be a displacement field with scalar components $u_j$. For $i,j\in\{0,1,2\}$, the fractal strain tensor is defined by
\begin{equation}\label{eq:strain_fractal}
  \varepsilon_{ij}^{(\alpha)} := \frac12\bigl(D_i^{(\alpha)}u_j + D_j^{(\alpha)}u_i\bigr).
\end{equation}
\end{definition}

As in the classical theory, these strain components {satisfy} the Saint-Venant compatibility conditions:
\begin{equation}\label{eq:compat_fractal}
  D_k^{(\alpha)}D_l^{(\alpha)}\varepsilon_{ij}^{(\alpha)}
  +D_i^{(\alpha)}D_j^{(\alpha)}\varepsilon_{kl}^{(\alpha)}
  -D_i^{(\alpha)}D_l^{(\alpha)}\varepsilon_{jk}^{(\alpha)}
  -D_j^{(\alpha)}D_k^{(\alpha)}\varepsilon_{il}^{(\alpha)}=0.
\end{equation}
{Indeed, by \eqref{eq:strain_fractal} the left-hand side of \eqref{eq:compat_fractal} is one half of a sum of eight third-order derivatives of the components of $\mathbf u$, which cancel in pairs because mixed local fractional partial derivatives commute}

{Geometrically speaking, \eqref{eq:compat_fractal} rephrases the standard} necessary condition for the locally deformed cells to fit together without gaps or overlaps; we do not use its converse, that is, the existence of a displacement field $\mathbf u$ for a given compatible strain field.

\subsection{Constitutive Law}

The kinematic strain in Definition~\ref{def:strain},
$\varepsilon_{ij}^{(\alpha)}=\tfrac12(D_i^{(\alpha)}u_j+D_j^{(\alpha)}u_i)$,
does not have the classical dimension of strain when $\alpha\neq1$.
Since $[D_i^{(\alpha)}u_j]=[u_j]\,L^{-\alpha}$ and $[u]=L$, one obtains
$$[\varepsilon_{ij}^{(\alpha)}]=L^{1-\alpha}\neq L^0\quad(\alpha\neq1).$$
This is a direct consequence of the definition of the fractal derivatives: applying a derivative of order $\alpha$ to a kinematic quantity generally changes its classical physical dimension.

Lazopoulos~\cite{Lazopoulos2019} {points precisely to the foregoing} for one-dimensional fractional strain and resolves it by passing to a rescaled ``$\Lambda$-fractional'' space. Patnaik and Semperlotti~\cite{PatnaikSemperlotti2020} address the same issue for the fractional deformation-gradient tensor by inserting an explicit length-scale factor $L^{\alpha-1}$ into the derivative itself, noting that these factors ``ensure the dimensional consistency of the deformation gradient tensor.'' In the setting of fractional and fractal continua, this device goes back to Sumelka and B{\l}aszczyk~\cite{SumelkaBlaszczyk2014}; for continua with a genuinely fractal (Hausdorff-dimension) metric, a related resolution-length regularization was introduced by Balankin~\cite{Balankin2015}.

\begin{postulate}[Material length scale; dimensionless constitutive strain]\label{post:length-scale}
Fix a material length scale $\ell>0$, intrinsic to the fractal medium. Define the \emph{constitutive strain}
\begin{equation}\label{eq:strain_hat}
  \widehat\varepsilon_{ij}^{(\alpha)} := \ell^{\,\alpha-1}\,\varepsilon_{ij}^{(\alpha)}.
\end{equation}
\end{postulate}

The parameter $\ell$ represents the lower end of the finite window $(\ell,L)$ over which the material actually behaves in a fractal manner, in the sense of Balankin~\cite{Balankin2015}: below $\ell$ the micro-structure is ordinary discrete matter (grains, fibers), while above $L$ it homogenizes into a classical continuum. The kinematics based on $D^{(\alpha)}$ is intended to model the material only inside this window; $\ell$ pins the otherwise scale-free quantity $\varepsilon_{ij}^{(\alpha)}$ (of dimension $L^{1-\alpha}$) to a physical length, much as a lattice constant pins a continuum elastic modulus to a physical stress in homogenization from a discrete crystal. Its numerical value is a material property, to be fixed by experiment or by homogenization from a micro-mechanical model of the fractal micro-structure.

This assumption is {purely needed} to fix a dimensional mismatch, rather than to state a physical law. {Indeed,} \eqref{eq:strain_hat} asserts that such an $\ell$ exists and that this specific power of it is the appropriate factor with which to multiply $\varepsilon_{ij}^{(\alpha)}$. Both choices are modeled directly on \cite{PatnaikSemperlotti2020,SumelkaBlaszczyk2014}. As a consequence,
\[
  [\widehat\varepsilon_{ij}^{(\alpha)}] = L^{\alpha-1}\cdot L^{1-\alpha} = L^0 \qquad \text{for every }\alpha,
\]
exactly as in the classical theory. In what follows, {any mention of or reference to} ``strain'', are to be understood as the dimensionless constitutive strain $\widehat\varepsilon_{ij}^{(\alpha)}$, and Postulate~\ref{post:hooke} below is stated in terms of it. The underlying idea $\sigma^* = E\,\varepsilon^*$ with $\varepsilon^* = \mathcal D^\alpha u$, {in the scalar one-dimensional case, already appears} in \cite{CarpinteriCornetti2002}.

\begin{postulate}[Fractal Hooke's law]\label{post:hooke}
For an isotropic fractal elastic medium, the stress and constitutive strain are related by
\begin{equation}\label{eq:hooke_fractal}
  \sigma_{ij}^{(\alpha)} = \lambda^{(\alpha)}\,\delta_{ij}\,\operatorname{tr}\widehat\varepsilon^{(\alpha)} + 2\mu^{(\alpha)}\,\widehat\varepsilon_{ij}^{(\alpha)},
\end{equation}
where $\operatorname{tr}\widehat\varepsilon^{(\alpha)} = \widehat\varepsilon_{00}^{(\alpha)} + \widehat\varepsilon_{11}^{(\alpha)} + \widehat\varepsilon_{22}^{(\alpha)}$. The parameters $\lambda^{(\alpha)},\mu^{(\alpha)}$ satisfy:
\begin{enumerate}[(i)] 
  \item {Positivity:} $\mu^{(\alpha)}>0$ and $3\lambda^{(\alpha)}+2\mu^{(\alpha)}>0$, so that the strain energy
        \[
          W^{(\alpha)} = \tfrac12\,\operatorname{tr}\left(\boldsymbol\sigma^{(\alpha)}\widehat{\boldsymbol\varepsilon}^{(\alpha)}\right) > 0,
        \]
  \item {Classical limit:} $\displaystyle\lim_{\alpha\to1}\lambda^{(\alpha)}=\lambda$ and $\displaystyle\lim_{\alpha\to1}\mu^{(\alpha)}=\mu$.
\end{enumerate}
\end{postulate}

Since $\widehat\varepsilon_{ij}^{(\alpha)}$ is dimensionless, \eqref{eq:hooke_fractal} requires $[\lambda^{(\alpha)}] = [\mu^{(\alpha)}] = [\sigma_{ij}^{(\alpha)}]$. The simplest choice compatible with the classical limit is
\[
  [\lambda^{(\alpha)}] = [\mu^{(\alpha)}] = FL^{-2} \quad \text{for every }\alpha,
\]
so that $[\sigma_{ij}^{(\alpha)}] = FL^{-2}$ and $[W^{(\alpha)}] = FL^{-2}$, matching the classical dimensions of stress and energy density at every $\alpha$. No $\alpha$-dependent rescaling of the moduli is needed for dimensional reasons alone; whether their numerical values vary with $\alpha$ or $\ell$ is a separate modelling question.

\subsection{Equilibrium}

\begin{postulate}[Fractal equilibrium]\label{post:equilibrium}
The stress field satisfies the equilibrium equation
\begin{equation}\label{eq:equilibrium_fractal}
  \ell^{\alpha-1}\sum_{j=0}^2 D_j^{(\alpha)}\sigma_{ij}^{(\alpha)} + f_i^{(\alpha)} = 0 \quad \text{in }\Omega^{(\alpha)},
\end{equation}
for each $i\in\{0,1,2\}$, where $\ell$ is the material length scale introduced in Postulate~\ref{post:length-scale}. 
\end{postulate}

The factor $\ell^{\alpha-1}$ is required by dimensional consistency once $[\sigma_{ij}^{(\alpha)}]=FL^{-2}$ is fixed (Postulate~\ref{post:hooke}). The raw divergence $\sum_{j=0}^2 D_j^{(\alpha)}\sigma_{ij}^{(\alpha)}$ has dimension $FL^{-2-\alpha}$, whereas the body-force density has the classical dimension $FL^{-3}$. Multiplying by $\ell^{\alpha-1}$ removes exactly this mismatch,
\[
  \left[\ell^{\alpha-1}D_j^{(\alpha)}\sigma_{ij}^{(\alpha)}\right] = FL^{-2-\alpha}\cdot L^{\alpha-1} = FL^{-3},
\]
so that $f_i^{(\alpha)}$ retains its ordinary physical dimension and numerical value regardless of $\alpha$. This is the same length-scale device as in Postulate~\ref{post:length-scale}, applied here to the stress-divergence operator. Leaving $D_j^{(\alpha)}$ unregularized would force {to the physical strength itself to be reinterpreted} at every choice of $\alpha$, which {lacks a physical basis}.

\subsection{The Fractal Lam\'e Equation}

Substituting $\sigma_{ij}^{(\alpha)}$ from Postulate~\ref{post:hooke}, together with $\widehat\varepsilon_{ij}^{(\alpha)}=\ell^{\alpha-1}\varepsilon_{ij}^{(\alpha)}$ and $\varepsilon_{ij}^{(\alpha)}=\tfrac12(D_i^{(\alpha)}u_j+D_j^{(\alpha)}u_i)$, into \eqref{eq:equilibrium_fractal} gives, for spatially constant $\lambda^{(\alpha)},\mu^{(\alpha)}$,
\begin{equation}\label{eq:lame_fractal}
  \ell^{2(\alpha-1)}\Bigl[\mu^{(\alpha)}\Delta^{(\alpha)}u_i
  +(\lambda^{(\alpha)}+\mu^{(\alpha)})D_i^{(\alpha)}\Bigl(\sum_{j=0}^2D_j^{(\alpha)}u_j\Bigr)\Bigr]
  +f_i^{(\alpha)}=0,
  \qquad i=0,1,2,
\end{equation}
where $\Delta^{(\alpha)}$ is the fractal Laplacian applied component-wise to $\mathbf u$.

The three scalar equations \eqref{eq:lame_fractal} ($i=0,1,2$) combine into a single quaternion equation for $\mathbf u\in\mathcal A$, using both $\overline\partial^{(\alpha)}$ and its conjugate. Define the fractal dilatation
\begin{equation}\label{eq:dilatation}
  e^{(\alpha)} := \sum_{j=0}^2 D_j^{(\alpha)}u_j = \mathrm{Sc}\bigl(\partial^{(\alpha)}\mathbf u\bigr).
\end{equation}
The terms $\mu^{(\alpha)}\Delta^{(\alpha)}u_i$ ($i=0,1,2$) are exactly the $\mathcal A$-components of $\mu^{(\alpha)}\Delta^{(\alpha)}\mathbf u = \mu^{(\alpha)}\overline\partial^{(\alpha)}\bigl[\partial^{(\alpha)}\mathbf u\bigr]$, and, viewing $e^{(\alpha)}$ as a scalar field, $D_i^{(\alpha)}e^{(\alpha)} = \bigl(\overline\partial^{(\alpha)}[e^{(\alpha)}]\bigr)_i$. Both quaternion expressions have zero $\mathbf e_3$-component whenever $\mathbf u\in\mathcal A$, so no information is lost or added by combining the three scalar equations into {only} one:
\begin{equation}\label{eq:lame_DMT}
  \ell^{2(\alpha-1)}\Bigl[\mu^{(\alpha)}\Delta^{(\alpha)}\mathbf u
  +(\lambda^{(\alpha)}+\mu^{(\alpha)})\,\overline\partial^{(\alpha)}e^{(\alpha)}\Bigr]
  +\mathbf f^{(\alpha)} = \mathbf 0.
\end{equation}
The prefactor $\ell^{2(\alpha-1)}$ reflects the two independent length-scale regularizations: one from the constitutive strain \eqref{eq:strain_hat} and one from the equilibrium equation \eqref{eq:equilibrium_fractal}. It reduces to $1$ when $\alpha=1$ and does not multiply $\mathbf f^{(\alpha)}$, which already has the classical body-force dimension $FL^{-3}$ by construction of \eqref{eq:equilibrium_fractal}.

In the absence of body forces, $\mathbf f^{(\alpha)}=\mathbf 0$ (free vibration or static equilibrium with loading applied only through boundary data). Dividing by the non-zero prefactor $\ell^{2(\alpha-1)}$, equation \eqref{eq:lame_DMT} reduces to
\begin{equation}\label{eq:lame_homogeneous}
  \mu^{(\alpha)}\Delta^{(\alpha)}\mathbf u
  +(\lambda^{(\alpha)}+\mu^{(\alpha)})\,\overline\partial^{(\alpha)}e^{(\alpha)}=\mathbf 0.
\end{equation}

We denote by
\begin{equation}\label{eq:Lame-operator}
  L^{(\alpha)}[\mathbf u]
  :=\mu^{(\alpha)}\Delta^{(\alpha)}\mathbf u
  +(\lambda^{(\alpha)}+\mu^{(\alpha)})\,\overline\partial^{(\alpha)}e^{(\alpha)}
\end{equation}
the \emph{fractal Lam\'e operator}, so that \eqref{eq:lame_homogeneous} reads $L^{(\alpha)}[\mathbf u]=\mathbf 0$. The following theorem expresses this operator through the two first-order operators $\overline\partial^{(\alpha)}$ and $\partial^{(\alpha)}$.

\begin{theorem}[Quaternionic factorization of the fractal Lam\'e operator]\label{thm:Q}
Let $\mathbf u\in\mathcal A$. Then
\begin{equation}\label{eq:Q}
  L^{(\alpha)}[\mathbf u]
  =\overline\partial^{(\alpha)}\Bigl[\mu^{(\alpha)}\,\partial^{(\alpha)}\mathbf u
  +(\lambda^{(\alpha)}+\mu^{(\alpha)})\,\mathrm{Sc}\bigl(\partial^{(\alpha)}\mathbf u\bigr)\Bigr].
\end{equation}
In particular, $L^{(\alpha)}[\mathbf u]=\mathbf 0$ if and only if the associated $\mathbb H$-valued field $F:=\mu^{(\alpha)}\partial^{(\alpha)}\mathbf u+(\lambda^{(\alpha)}+\mu^{(\alpha)})\,\mathrm{Sc}(\partial^{(\alpha)}\mathbf u)$ is fractal-monogenic.
\end{theorem}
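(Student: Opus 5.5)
The plan is to expand the right-hand side of \eqref{eq:Q} by $\mathbb R$-linearity and reduce it to two facts already recorded in Section~\ref{subsec:toolkit}.

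Because $\mu^{(\alpha)}$ and $\lambda^{(\alpha)}+\mu^{(\alpha)}$ are real constants, the rule \eqref{rule:lin} lets us split $\overline\partial^{(\alpha)}[\,\cdot\,]$ over the sum and pull the constants out. This gives
\[
\overline\partial^{(\alpha)}F=\mu^{(\alpha)}\,\overline\partial^{(\alpha)}\partial^{(\alpha)}\mathbf u+(\lambda^{(\alpha)}+\mu^{(\alpha)})\,\overline\partial^{(\alpha)}\bigl[\mathrm{Sc}(\partial^{(\alpha)}\mathbf u)\bigr].
\]

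For the first term, we use the factorization $\overline\partial^{(\alpha)}\partial^{(\alpha)}=\Delta^{(\alpha)}$, which comes from the commutation of mixed local fractional derivatives. It is worth checking this identity directly on an $\mathcal A$-valued $\mathbf u$. Expand
\[
\overline\partial^{(\alpha)}\partial^{(\alpha)}\mathbf u=\sum_{k,l}\mathbf e_k\bar{\mathbf e}_l D_k^{(\alpha)}D_l^{(\alpha)}\mathbf u,
\]
where $\bar{\mathbf e}_0=1$ and $\bar{\mathbf e}_l=-\mathbf e_l$ for $l=1,2$. The diagonal terms give $(\mathbf e_0^2-\mathbf e_1^2-\mathbf e_2^2)$-weighted second derivatives, i.e. $\sum_k D_k^{(\alpha)}D_k^{(\alpha)}\mathbf u$. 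The off-diagonal terms cancel in pairs:
\begin{itemize}
\item the $(0,l)$ and $(l,0)$ pairs give $-\mathbf e_l+\mathbf e_l=0$;
\item the $(1,2)$ and $(2,1)$ pairs give $-\mathbf e_1\mathbf e_2-\mathbf e_2\mathbf e_1=0$.
\end{itemize}
Both cancellations require $D_k^{(\alpha)}D_l^{(\alpha)}=D_l^{(\alpha)}D_k^{(\alpha)}$, the local fractional Clairaut property. The derivatives act componentwise and the units multiply from the left, so noncommutativity of $\mathbb H$ causes no trouble. Hence the first term equals $\mu^{(\alpha)}\Delta^{(\alpha)}\mathbf u$.

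For the second term, we invoke the identity \eqref{eq:dilatation}, namely $\mathrm{Sc}(\partial^{(\alpha)}\mathbf u)=e^{(\alpha)}$. We verify it quickly: the scalar part of $\mathbf e_0D_0\mathbf u-\mathbf e_1D_1\mathbf u-\mathbf e_2D_2\mathbf u$ is $D_0u_0-(\mathbf e_1)^2D_1u_1-(\mathbf e_2)^2D_2u_2=\sum_j D_j^{(\alpha)}u_j$. Applying $\overline\partial^{(\alpha)}$ to this real-valued field gives $\overline\partial^{(\alpha)}e^{(\alpha)}$, so the sum reproduces the definition \eqref{eq:Lame-operator} of $L^{(\alpha)}[\mathbf u]$. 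The ``in particular'' statement is then immediate, since $F$ is fractal-monogenic exactly when $\overline\partial^{(\alpha)}F=0$.

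The only genuine obstacle is regularity. The factorization needs the second mixed local fractional derivatives of the $u_j$ to exist and be local fractional continuous, so that the fractal Clairaut theorem applies. I would state this as the standing assumption, as the paper does with ``sufficiently regular''. On the polynomial sector $\mathcal P^{(\alpha)}_{\mathcal A}$ this is automatic, since $D_i^{(\alpha)}D_j^{(\alpha)}m_p=m_{p-\xi_i-\xi_j}$ for either order of differentiation.
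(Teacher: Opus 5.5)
Your proposal is correct and takes the same route as the paper. You split $\overline\partial^{(\alpha)}F$ by $\mathbb R$-linearity, apply the factorization $\overline\partial^{(\alpha)}\partial^{(\alpha)}=\Delta^{(\alpha)}$ to the first term, and use $\mathrm{Sc}(\partial^{(\alpha)}\mathbf u)=e^{(\alpha)}$ in the second term, which recovers the definition \eqref{eq:Lame-operator}. The paper's proof only cites the factorization, so your explicit check of the off-diagonal cancellations and your note on the Clairaut-type regularity hypothesis fill in details it leaves implicit.
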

\begin{proof}
By the factorization $\overline\partial^{(\alpha)}\partial^{(\alpha)}=\Delta^{(\alpha)}$ (Section~\ref{sec:prelim}), the claim follows.
\end{proof}

\begin{remark}
Writing $\partial^{(\alpha)}\mathbf u=e^{(\alpha)}+\mathrm{Vec}(\partial^{(\alpha)}\mathbf u)$, the field in \eqref{eq:Q} is $F=(\lambda^{(\alpha)}+2\mu^{(\alpha)})\,e^{(\alpha)}+\mu^{(\alpha)}\,\mathrm{Vec}(\partial^{(\alpha)}\mathbf u)$, which has in general a non-zero $\mathbf e_3$-component even though $\mathbf u\in\mathcal A$. At $\alpha=1$ identity \eqref{eq:Q} expresses the classical Lam\'e--Navier operator through the quaternionic operators $\overline\partial$ and $\partial$; that it survives fractalization is a consequence of using only $\mathbb R$-linearity and the factorization $\overline\partial^{(\alpha)}\partial^{(\alpha)}=\Delta^{(\alpha)}$.
\end{remark}

\begin{remark}\label{rem:pn-open-classically}
At $\alpha=1$, \eqref{eq:lame_homogeneous} reduces to the classical homogeneous Navier--Lam\'e equation
\[
  \mu\Delta\mathbf u + (\lambda+\mu)\nabla(\nabla\cdot\mathbf u) = \mathbf 0.
\]
{In the classical theory, various representation formulae expressing the solution of the homogeneous Navier-Lam\'e equation are well-known. One of them, due to Kolosov and Muskhelishvili in the plane and extended to $\mathbb R^3$ in quaternionic form in \cite{Bock2014}, expresses $\mathbf u$ through two monogenic quaternionic potentials $\Phi,\Psi$. Section~\ref{sec:sol_rep} establishes a fractal analogue of this representation on the polynomial sector.}
\end{remark}

\section{Solution Representation Formulae}
\label{sec:sol_rep}

In this section we construct a complete solution representation of the fractal Lam\'e equation \eqref{eq:lame_homogeneous} on the polynomial sector, in the form of a fractal Kolosov--Muskhelishvili formula. The necessary toolkit is developed first, followed by the representation theorem and its consequences.

{The operators which we require are the component-wise fractal gradient and divergence, in addition to $\overline\partial^{(\alpha)},\partial^{(\alpha)}$.}
\begin{equation}\label{eq:grad-div}
  \operatorname{grad}^{(\alpha)}g := \sum_{j=0}^2 \mathbf e_j D_j^{(\alpha)}g,
  \qquad
  \operatorname{div}^{(\alpha)}\mathbf V := \sum_{j=0}^2 D_j^{(\alpha)}V_j,
\end{equation}
for a scalar $g$ and a vector $\mathbf V = V_0\mathbf e_0+V_1\mathbf e_1+V_2\mathbf e_2\in\mathcal A$. These satisfy
\[
  \operatorname{div}^{(\alpha)}\mathbf u = \mathrm{Sc}\bigl(\partial^{(\alpha)}[\mathbf u]\bigr),
  \qquad
  \operatorname{grad}^{(\alpha)}g = \overline\partial^{(\alpha)}[g],
\]
so that $\Delta^{(\alpha)}\mathbf u$ and $\operatorname{grad}^{(\alpha)}\operatorname{div}^{(\alpha)}\mathbf u$ reproduce the fractal Lam\'e equation \eqref{eq:lame_homogeneous}. {Its left-hand side is the fractal Lam\'e operator $L^{(\alpha)}$ of \eqref{eq:Lame-operator}, which can be written as}
\begin{equation}\label{eq:Lalpha-def}
  {L^{(\alpha)}[\mathbf u] = \mua\,\Lapa\mathbf u
  + (\laa+\mua)\,\grada\bigl(\diva\mathbf u\bigr).}
\end{equation}

We set 
\[ 
	\nua := \dfrac{\laa}{2(\laa+\mua)}\ ,\qquad 
	\kaa := 4(1-\nua)\ ,\qquad
	\caa := \dfrac{\laa+\mua}{\mua}.
\]
By Postulate~\ref{post:hooke}(i), $\mua>0$ and $\laa+\mua>\tfrac13\mua>0$; moreover $1-2\nua=\tfrac{\mua}{\laa+\mua}$. Hence $\nua<\tfrac12$ and
\begin{equation}\label{eq:constants}
  \caa=\frac{1}{1-2\nua},\qquad
  \kaa-2=2\bigl(1-2\nua\bigr)>0,\qquad
  \caa\bigl(\kaa-2\bigr)=2 .
\end{equation}
By \eqref{eq:Lalpha-def}, $L^{(\alpha)}[\mathbf u]=\mua\bigl(\Lapa\mathbf u+\caa\,\grada\diva\mathbf u\bigr)$. For a vector $\mathbf V=V_0\mathbf e_0+V_1\mathbf e_1+V_2\mathbf e_2\in\mathcal A$ we also set
\[
  \curla\mathbf V:=(\Da_1V_2-\Da_2V_1)\,\mathbf e_0+(\Da_2V_0-\Da_0V_2)\,\mathbf e_1+(\Da_0V_1-\Da_1V_0)\,\mathbf e_2 .
\]

{Operators and constants carrying the superscript $(\alpha)$ always referred to the fractal setting; the same symbols without the superscript ($\partial$, $\overline\partial$, $\Delta$, $\operatorname{grad}$, $\operatorname{div}$, $\mu$, $\nu$) denote their classical counterparts without extra notification} and occur only where classical statements are recalled. Since the operators $\Da_j$ commute on $\mathcal P^{(\alpha)}$, one has $\Lapa\,\grada=\grada\,\Lapa$, $\diva\,\grada=\Lapa$, and, by componentwise expansion,
\begin{equation}\label{eq:curlcurl}
  \curla\curla\mathbf V=\grada\diva\mathbf V-\Lapa\mathbf V .
\end{equation}

\subsection{Fractal Position Operator and Fischer Duality}
\label{subsec:almansi}

{Now, we will introduce a corrected position operator in the fractal setting defined as follows}.

\begin{definition}[Corrected position operator]\label{def:X-operator}
For a polynomial of the form $m_p(x_i)\,g\in\mathcal P^{(\alpha)}$ with $g$ independent of $x_i$ ($p=0,1,2,\ldots$), define
$$X_i^{(\alpha)}\bigl[m_p(x_i)\,g\bigr]:=(p+1)\,m_{p+1}(x_i)\,g,$$
and extend by linearity to all of $\mathcal P^{(\alpha)}$.
\end{definition}

In terms of the multi-index notation of Section~\ref{sec:prelim}, Definition~\ref{def:X-operator} reads $\Xa_i\,m_p=(p_i+1)\,m_{p+\xi_i}$.

\begin{remark}\label{rem:X-grading}
By construction, $X_i^{(\alpha)}$ maps $\mathcal P_n^{(\alpha)}$ into $\mathcal P_{n+1}^{(\alpha)}$, so it preserves the grading and raises the combined degree by one. In this sense, $X_i^{(\alpha)}$ is best viewed as a formal ladder operator on the graded space spanned by $\{m_p(x_i)\}$---i.e., a basis relabeling---rather than as a pointwise operation on functions of $x_i$.
\end{remark}

\begin{lemma}[Exact Weyl relation]\label{lemma:X-weyl}
For every $f\in\mathcal P^{(\alpha)}$,
\[
  D_i^{(\alpha)}X_i^{(\alpha)}f - X_i^{(\alpha)}D_i^{(\alpha)}f = f,
\]
i.e.\ the classical relation $[\partial_{x_i},x_i]=1$ holds exactly, not merely at $\alpha=1$.
\end{lemma}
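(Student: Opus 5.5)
Since both sides of the identity are $\mathbb R$-linear in $f$ (by \eqref{rule:lin} for $D_i^{(\alpha)}$ and by construction for $X_i^{(\alpha)}$ in Definition~\ref{def:X-operator}), and the monomials $m_p$, $p\in\mathbb Z_{\ge0}^3$, form a basis of $\mathcal P^{(\alpha)}$, it suffices to verify the identity on a single basis monomial $m_p$. Both operators act only on the $i$-th factor $m_{p_i}(x_i)$, treating the remaining factor $g=\prod_{j\ne i}m_{p_j}(x_j)$ as a constant. This reduces everything to a one-variable computation with $k:=p_i$.

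The computation uses the multi-index forms $D_i^{(\alpha)}m_p=m_{p-\xi_i}$ (from the monomial rule \eqref{rule:monomial}, with the convention $m_{p-\xi_i}=0$ if $p_i=0$) and $X_i^{(\alpha)}m_p=(p_i+1)\,m_{p+\xi_i}$. First,
\[
  D_i^{(\alpha)}X_i^{(\alpha)}m_p=(p_i+1)\,D_i^{(\alpha)}m_{p+\xi_i}=(p_i+1)\,m_p .
\]
Second, for $p_i\ge1$,
\[
  X_i^{(\alpha)}D_i^{(\alpha)}m_p=X_i^{(\alpha)}m_{p-\xi_i}=\bigl((p_i-1)+1\bigr)m_p=p_i\,m_p ,
\]
so the commutator equals $(p_i+1-p_i)\,m_p=m_p$.

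The only case needing separate attention is $p_i=0$, where $D_i^{(\alpha)}m_p=0$. There the second term vanishes, and the first term gives $1\cdot m_p=m_p$, consistent with the general formula.

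I do not expect a genuine obstacle, since the factor $(p+1)$ in Definition~\ref{def:X-operator} was chosen exactly to make the index bookkeeping telescope. The one point to state carefully is well-definedness: the decomposition $m_p(x_i)\,g$ in Definition~\ref{def:X-operator} must be unambiguous, which holds because the basis is the product monomials. I would also remark why the unnormalized multiplication by $m_1(x_i)$ fails. The product $m_1 m_k$ equals $\frac{\Gamma(1+(k+1)\alpha)}{\Gamma(1+\alpha)\Gamma(1+k\alpha)}\,m_{k+1}$, and this coefficient differs from $k+1$ unless $\alpha=1$. That explains the phrase ``exactly, not merely at $\alpha=1$''.
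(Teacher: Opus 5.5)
Your proof is correct and is essentially the paper's argument: reduce by linearity to terms $m_{p}(x_i)\,g$ with $g$ independent of $x_i$, compute $D_i^{(\alpha)}X_i^{(\alpha)}$ and $X_i^{(\alpha)}D_i^{(\alpha)}$ via the monomial rule and Definition~\ref{def:X-operator}, and treat the case $p_i=0$ separately. One small slip, in your side remark only: for $k=0$ the coefficient $\Gamma(1+\alpha)/\bigl(\Gamma(1+\alpha)\Gamma(1)\bigr)$ equals $1=k+1$ for every $\alpha$, so the mismatch with $k+1$ occurs only for $k\ge1$ and $\alpha\neq1$.
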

\begin{proof}
It suffices to verify the identity on polynomials of the form $m_p(x_i)g$ with $g$ independent of $x_i$. By definition, we have
\[
  D_i^{(\alpha)}X_i^{(\alpha)}[m_p(x_i) g] = D_i^{(\alpha)}[(p+1)m_{p+1}(x_i) g] = (p+1)m_{p}(x_i) g,
\]
and for $p\ge 1$,
\[
  X_i^{(\alpha)}D_i^{(\alpha)}[m_p(x_i) g] = X_i^{(\alpha)}[m_{p-1}(x_i) g] = p m_p(x_i) g,
\]
so the commutator equals $(p+1)m_p g - p m_p g = m_p g$. For $p=0$ we have $D_i^{(\alpha)}[g]=0$, hence the commutator is $1\cdot g - 0 = g$. In both cases the result is $f$.
\end{proof}

For $j\neq i$ the operators $\Da_j$ and $\Xa_i$ act on different factors of a basis monomial and therefore commute. Together with Lemma~\ref{lemma:X-weyl} this gives $\Da_j\Xa_i-\Xa_i\Da_j=\delta_{ij}$ on $\mathcal P^{(\alpha)}$, and the operators $\Xa_i$ commute with each other.

For $\varphi\in\mathcal P^{(\alpha)}$ we set $E^{(\alpha)}[\varphi]:=\sum_{i=0}^2\Xa_i\bigl[\Da_i\varphi\bigr]$ (the fractal Euler operator).

\begin{lemma}[Degree lemma]\label{lem:degree}
For every $\varphi\in\mathcal P_n^{(\alpha)}$, $E^{(\alpha)}[\varphi] = n\varphi.$
\end{lemma}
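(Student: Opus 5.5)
By linearity of $E^{(\alpha)}$ (each $\Xa_i$ and $\Da_i$ is $\mathbb R$-linear on $\mathcal P^{(\alpha)}$), it suffices to verify the identity on the basis monomials $m_p$ with $|p|=n$. These span $\mathcal P_n^{(\alpha)}$ by the discussion in Section~\ref{sec:prelim}. The plan is therefore to compute $\Xa_i\Da_i m_p$ for each $i$ separately and then sum over $i=0,1,2$.

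Fix $i$. If $p_i\ge 1$, the multi-index rule gives $\Da_i m_p = m_{p-\xi_i}$. By Definition~\ref{def:X-operator}, in the multi-index form $\Xa_i m_q=(q_i+1)m_{q+\xi_i}$ applied with $q=p-\xi_i$, we get
\[
\Xa_i\Da_i m_p=\bigl((p_i-1)+1\bigr)m_{p}=p_i\,m_p .
\]
If $p_i=0$, then $\Da_i m_p=0$ by convention, so $\Xa_i\Da_i m_p=0=p_i\,m_p$. Both cases give $\Xa_i\Da_i m_p=p_i m_p$. Alternatively, one can read this off Lemma~\ref{lemma:X-weyl}: $\Xa_i\Da_i m_p=\Da_i\Xa_i m_p-m_p=(p_i+1)m_p-m_p$, since $\Da_i m_{p+\xi_i}=m_p$ always holds. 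This version avoids the case split.

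Summing over $i$ then gives $E^{(\alpha)}[m_p]=(p_0+p_1+p_2)m_p=|p|\,m_p=n\,m_p$. Extending by linearity yields $E^{(\alpha)}[\varphi]=n\varphi$ for all $\varphi\in\mathcal P_n^{(\alpha)}$.

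There is no real obstacle here. The only point needing care is the well-definedness of $\Xa_i$ on a monomial regarded as $m_{p_i}(x_i)\,g$ with $g=\prod_{j\ne i}m_{p_j}(x_j)$ independent of $x_i$. This is exactly the form covered by Definition~\ref{def:X-operator}, so no ambiguity arises on the basis.
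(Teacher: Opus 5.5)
Your proof is correct and follows essentially the same argument as the paper. You reduce by linearity to basis monomials $m_p$ with $|p|=n$, show $X_i^{(\alpha)}D_i^{(\alpha)}m_p=p_i\,m_p$ (including the case $p_i=0$), and sum over $i=0,1,2$; your alternative derivation of $X_i^{(\alpha)}D_i^{(\alpha)}m_p=p_i\,m_p$ via the Weyl relation (Lemma~\ref{lemma:X-weyl}) is also valid and avoids the case split.
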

\begin{proof}
By linearity it suffices to check the identity on a monomial
$\varphi = c\,m_{p_0}(x_0)m_{p_1}(x_1)m_{p_2}(x_2)$ with $p_0+p_1+p_2=n$.
For each $i$, $D_i^{(\alpha)}$ lowers $p_i$ to $p_i-1$ with coefficient $1$ (rule (M)), and $X_i^{(\alpha)}$ raises it back to $p_i$ with coefficient $p_i$ (Definition~\ref{def:X-operator}; the result is $0$ when $p_i=0$), so $X_i^{(\alpha)}[D_i^{(\alpha)}\varphi]=p_i\varphi$. Summing over $i=0,1,2$ gives $(p_0+p_1+p_2)\varphi=n\varphi$.
\end{proof}

\begin{lemma}[{Commutator identity for $X_i^{(\alpha)}$}]\label{lem:leibniz-X}
For every $f\in\mathcal P^{(\alpha)}$ and $i\in\{0,1,2\}$,
$$
  \Delta^{(\alpha)}\bigl[X_i^{(\alpha)}f\bigr]
  = X_i^{(\alpha)}\bigl[\Delta^{(\alpha)}f\bigr]+2\,D_i^{(\alpha)}f.
$$
\end{lemma}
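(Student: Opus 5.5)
The plan is to reduce the identity to the commutation relations already established right after Lemma~\ref{lemma:X-weyl}: on $\mathcal P^{(\alpha)}$ one has $\Da_j\Xa_i-\Xa_i\Da_j=\delta_{ij}$. Write $\Lapa=\sum_{k=0}^2\Da_k\Da_k$. By $\mathbb R$-linearity of all operators involved, it suffices to work term by term in $k$, and everything is well defined because $\Da_k$ and $\Xa_i$ both map $\mathcal P^{(\alpha)}$ into itself.

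For $k\neq i$, the operator $\Da_k$ commutes with $\Xa_i$, so applying this twice gives $\Da_k\Da_k\Xa_i f=\Xa_i\Da_k\Da_k f$. These terms therefore contribute nothing beyond $\Xa_i\Lapa f$.

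For $k=i$, apply the Weyl relation of Lemma~\ref{lemma:X-weyl} twice:
\[
\Da_i\Da_i\Xa_i f=\Da_i\bigl(\Xa_i\Da_i f+f\bigr)=\bigl(\Xa_i\Da_i\Da_i f+\Da_i f\bigr)+\Da_i f .
\]
The first equality uses the Weyl relation for $f$. The second uses it for $\Da_i f$, which is legitimate because $\Da_i f\in\mathcal P^{(\alpha)}$. Summing over $k$ gives $\Lapa\Xa_i f=\Xa_i\Lapa f+2\Da_i f$.

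As a sanity check, the identity can also be verified directly on a basis monomial $m_p$, using $\Xa_i m_p=(p_i+1)m_{p+\xi_i}$ and $\Da_i m_p=m_{p-\xi_i}$. The $k=i$ term gives $(p_i+1)m_{p-\xi_i}$ on the left and $(p_i-1)m_{p-\xi_i}+2m_{p-\xi_i}$ on the right, which agree. The edge cases $p_i=0$ and $p_i=1$ need a separate glance because of the convention $m_{q}=0$ for negative entries, and they also check out.

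I expect no real obstacle here. The only point needing care is that Lemma~\ref{lemma:X-weyl} is applied to $\Da_i f$ rather than to $f$, which requires $\Da_i f$ to lie in $\mathcal P^{(\alpha)}$; this holds by rule (M). Similarly, the commutation of $\Xa_i$ with $\Da_k$ for $k\neq i$ holds on all of $\mathcal P^{(\alpha)}$ by linearity from the monomial basis.
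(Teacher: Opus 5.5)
Your proof is correct and is essentially the paper's argument. You split $\Lapa=\sum_k(\Da_k)^2$, commute $\Da_k$ past $\Xa_i$ for $k\neq i$, and apply the Weyl relation twice for $k=i$; the direct check on basis monomials is a harmless extra confirmation.
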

\begin{proof}
Write $\Delta^{(\alpha)}=\sum_{j=0}^2 (D_j^{(\alpha)})^2$. For $j\neq i$, $D_j^{(\alpha)}$ commutes with $X_i^{(\alpha)}$, so
\[
  (D_j^{(\alpha)})^2\bigl[X_i^{(\alpha)}f\bigr]
  = X_i^{(\alpha)}\bigl[(D_j^{(\alpha)})^2 f\bigr].
\]
For $j=i$, apply the exact Weyl relation (Lemma~\ref{lemma:X-weyl}) twice:
\[
  (D_i^{(\alpha)})^2\bigl[X_i^{(\alpha)}f\bigr]
  = D_i^{(\alpha)}\bigl[X_i^{(\alpha)}D_i^{(\alpha)}f + f\bigr] 
  = X_i^{(\alpha)}\bigl[(D_i^{(\alpha)})^2 f\bigr] + 2D_i^{(\alpha)}f.
\]
Summing over $j=0,1,2$ yields the claimed identity.
\end{proof}

\begin{corollary}\label{cor:leibniz-vec}
Let $\mathbf V = V_0\mathbf e_0 + V_1\mathbf e_1 + V_2\mathbf e_2 \in \mathcal A$ with $V_i\in\mathcal P^{(\alpha)}$. Then
\[
  \Delta^{(\alpha)}\Bigl[\sum_{i=0}^2 X_i^{(\alpha)}V_i\Bigr]
  = \sum_{i=0}^2 X_i^{(\alpha)}\bigl[\Delta^{(\alpha)}V_i\bigr]
    + 2\,\operatorname{div}^{(\alpha)}\mathbf V.
\]
\end{corollary}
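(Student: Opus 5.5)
The corollary follows by applying Lemma~\ref{lem:leibniz-X} to each component $V_i$ and summing over $i$. I will first observe that every operator involved is $\mathbb R$-linear on $\mathcal P^{(\alpha)}$: $\Lapa$ and $\Da_i$ by rule \eqref{rule:lin} together with the monomial rule \eqref{rule:monomial}, and $\Xa_i$ by construction in Definition~\ref{def:X-operator}. Consequently $\Lapa$ distributes over the finite sum $\sum_{i=0}^2 \Xa_i V_i$, and each $\Xa_i V_i$ again lies in $\mathcal P^{(\alpha)}$, so that $\Lapa$ can be applied to it.

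Next, for each fixed $i\in\{0,1,2\}$, I will invoke Lemma~\ref{lem:leibniz-X} with $f=V_i$:
\[
  \Lapa\bigl[\Xa_i V_i\bigr]=\Xa_i\bigl[\Lapa V_i\bigr]+2\,\Da_i V_i .
\]
Summing these three identities over $i=0,1,2$ gives
\[
  \Lapa\Bigl[\sum_{i=0}^2 \Xa_i V_i\Bigr]=\sum_{i=0}^2 \Xa_i\bigl[\Lapa V_i\bigr]+2\sum_{i=0}^2\Da_i V_i .
\]
By the definition \eqref{eq:grad-div}, the last sum is exactly $\diva\mathbf V$, which yields the claim.

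No step here presents a real obstacle. The only point needing care is bookkeeping: the quantity $\sum_i \Xa_i V_i$ is a scalar polynomial, since it pairs the index of the position operator with the index of the component, and is not an $\mathcal A$-valued field. So $\Lapa$ acts on it as on a scalar, and no quaternionic multiplication or componentwise convention enters.
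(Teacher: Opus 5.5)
Your proof is correct and is exactly the intended argument: the paper gives no separate proof of this corollary, since it follows immediately from Lemma~\ref{lem:leibniz-X}. You apply that lemma with $f=V_i$ for each $i$, sum over $i=0,1,2$ using the linearity of $\Lapa$, and identify $\sum_i \Da_i V_i$ with $\diva\mathbf V$ via \eqref{eq:grad-div}.
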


On the polynomial space $\mathcal P^{(\alpha)}$ with monomial basis $\{m_p\}$, we define the \emph{Fischer inner product} by specifying it on the basis:
\[
	\langle m_p, m_q \rangle := \frac{\delta_{pq}}{p!},
\]
where $\delta_{pq}=1$ if $p=q$ and $0$ otherwise. This bilinear form is symmetric and positive definite, hence it defines a genuine inner product on each finite-dimensional subspace $\mathcal P_n^{(\alpha)}$.

Two linear operators $A,B:\mathcal P^{(\alpha)}\to\mathcal P^{(\alpha)}$ are said to be \emph{adjoint} with respect to this inner product if
\[
	\langle Af, g \rangle = \langle f, Bg \rangle \quad \text{for all } f,g\in\mathcal P^{(\alpha)}.
\]
Since the inner product is non-degenerate on each $\mathcal P_n^{(\alpha)}$, every operator $A$ admits a unique adjoint $A^\dagger$ satisfying $\langle Af,g\rangle=\langle f,A^\dagger g\rangle$.

\begin{lemma}[Properties of the Fischer inner product and $\dim H_n^{(\alpha)}$]\label{lem:fischer}
With the above notation and definitions, the following statements hold:
\begin{enumerate}[(i)]
  \item For all $f,g\in\mathcal P^{(\alpha)}$ and $i=0,1,2$, one has 
  $ 	\langle \Xa_i f, g \rangle = \langle f, \Da_i g \rangle.$ 
  Consequently, the adjoint of $\Lapa$ with respect to the Fischer inner product is given by 
  \[ 
  	(\Lapa)^\dagger = r^{2(\alpha)} = \sum_{i=0}^2 (\Xa_i)^2.
  \]

  \item The operator $r^{2(\alpha)}:\mathcal P^{(\alpha)}\to\mathcal P^{(\alpha)}$ is injective.

  \item For every $n\ge 2$, the map
  $ \Lapa:\mathcal P_n^{(\alpha)} \longrightarrow \mathcal P_{n-2}^{(\alpha)}$
  is surjective. As a consequence, $\dim H_n^{(\alpha)} = 2n+1$ for all $n\ge 0$.

  \item Let $H_n^{(\alpha),2}$ be the space of fractal polynomials of degree $n$ depending only on $(x_1,x_2)$ and satisfying $\big((\Da_1)^2+(\Da_2)^2\big) f = 0$. Then $\dim H_n^{(\alpha),2} = 2$ for all $n\ge 1$.
\end{enumerate}
\end{lemma}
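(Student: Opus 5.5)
Everything is checked on the monomial basis, where $\Da_i$ and $\Xa_i$ act by $\Da_i m_p=m_{p-\xi_i}$ and $\Xa_i m_p=(p_i+1)m_{p+\xi_i}$.

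\emph{Part (i).} It suffices to take $f=m_p$ and $g=m_q$. We have $\langle \Xa_i m_p,m_q\rangle=(p_i+1)\delta_{p+\xi_i,q}/q!$. If $q=p+\xi_i$, then $q!=(p_i+1)\,p!$, so this equals $\delta_{p+\xi_i,q}/p!$. On the other side, $\langle m_p,\Da_i m_q\rangle=\delta_{p,q-\xi_i}/p!$, which is the same quantity. The case $q_i=0$ gives zero on both sides. Taking adjoints of $\Lapa=\sum_i(\Da_i)^2$ (adjoint of a product is the product of adjoints in reverse order) gives $(\Lapa)^\dagger=\sum_i(\Xa_i)^2=r^{2(\alpha)}$.

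\emph{Part (ii).} The operator $\Xa_i$ is an isomorphism of $\mathcal P^{(\alpha)}$ onto $\mathcal P^{(\alpha)}$ viewed as a graded space under the linear relabelling $m_p\mapsto p!\,x^p$ (monomials $x^p=x_0^{p_0}x_1^{p_1}x_2^{p_2}$). Indeed, $\Xa_i m_p=(p_i+1)m_{p+\xi_i}$ corresponds to $(p_i+1)\,p!\,x^{p+\xi_i}/(p+\xi_i)!\cdot(p+\xi_i)!/p!$. More simply, define $T:m_p\mapsto x^p/p!$. Then $T\Xa_i T^{-1}$ sends $x^p$ to $p!\,(p_i+1)\,x^{p+\xi_i}/(p+\xi_i)!=x^{p+\xi_i}$, i.e.\ it is multiplication by $x_i$. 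Hence $r^{2(\alpha)}$ is conjugate to multiplication by $|x|^2$ on ordinary polynomials, which is injective. (Likewise $T\Da_iT^{-1}=\partial_{x_i}$, since $\Da_i m_p=m_{p-\xi_i}$ corresponds to $x^p/p!\mapsto x^{p-\xi_i}/(p-\xi_i)!$, matching $\partial_i(x^p/p!)$.)

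\emph{Part (iii).} By (i), $\Lapa:\mathcal P_n^{(\alpha)}\to\mathcal P_{n-2}^{(\alpha)}$ is adjoint to $r^{2(\alpha)}:\mathcal P_{n-2}^{(\alpha)}\to\mathcal P_n^{(\alpha)}$. Since the Fischer product is positive definite on each graded piece, $\operatorname{im}\Lapa=(\ker r^{2(\alpha)})^\perp$, which is all of $\mathcal P_{n-2}^{(\alpha)}$ by (ii). Rank--nullity then gives
\[
\dim H_n^{(\alpha)}=\binom{n+2}{2}-\binom{n}{2}=2n+1 .
\]
For $n=0,1$ we have $\Lapa=0$ on $\mathcal P_n^{(\alpha)}$, so $\dim H_n^{(\alpha)}=\dim\mathcal P_n^{(\alpha)}$, namely $1$ and $3$.

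\emph{Part (iv).} Restrict the same argument to the variables $(x_1,x_2)$. The space of degree-$n$ polynomials in $(x_1,x_2)$ has dimension $n+1$, and the two-variable Laplacian maps it onto the degree-$(n-2)$ space of dimension $n-1$, using the same adjointness and the injectivity of $(\Xa_1)^2+(\Xa_2)^2$. This gives $\dim=2$ for $n\ge2$. For $n=1$ the Laplacian vanishes and the space $\{m_1(x_1),m_1(x_2)\}$ has dimension $2$.

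The only real obstacle is the injectivity in (ii). The conjugation by $T$ disposes of it cleanly and simultaneously confirms (i) and the Weyl relation. Alternatively, one could argue by leading terms in a lexicographic order on $p$, where $r^{2(\alpha)}$ sends the top monomial $m_p$ to a positive multiple of $m_{p+2\xi_0}$, which cannot cancel.
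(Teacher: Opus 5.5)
Your proof is correct. For (i), (iii) and (iv) it coincides with the paper's argument: the check of $\langle \Xa_i m_p,m_q\rangle=\langle m_p,\Da_i m_q\rangle$ on basis monomials, surjectivity of $\Lapa:\mathcal P_n^{(\alpha)}\to\mathcal P_{n-2}^{(\alpha)}$ as the adjoint of an injective map between orthogonal finite-dimensional graded pieces, the count $\binom{n+2}{2}-\binom{n}{2}$, and the verbatim two-variable repetition.

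The genuine difference is in (ii). The paper stays intrinsic and uses a leading-coefficient argument: for the largest $p^*$ with $c_{p^*}\neq0$, the coefficient of $m_{p^*+2\xi_0}$ in $r^{2(\alpha)}f$ is $(p^*_0+1)(p^*_0+2)c_{p^*}\neq0$. You instead conjugate by $T:m_p\mapsto x^p/p!$, which is exactly the inverse of the map $\mathcal T$ of Remark~\ref{rem:iso}. This turns $\Xa_i$ into multiplication by $x_i$, $\Da_i$ into $\partial_{x_i}$, and $r^{2(\alpha)}$ into multiplication by $x_0^2+x_1^2+x_2^2$. The latter is injective because $\mathbb R[x_0,x_1,x_2]$ is an integral domain. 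Your route is shorter and explains the whole lemma at once as the classical Fischer decomposition transported, since the Fischer product becomes $\langle x^p,x^q\rangle=p!\,\delta_{pq}$. The paper's route deliberately avoids $\mathcal T$, so that the section rests only on the Weyl relation and Fischer duality, as stated at the end of Remark~\ref{rem:iso}.

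Two small repairs are needed. First, delete the first two sentences of your (ii). $\Xa_i$ is not an isomorphism of $\mathcal P^{(\alpha)}$ onto itself, since no $m_q$ with $q_i=0$ lies in its image. Also, the relabelling $m_p\mapsto p!\,x^p$ has the wrong normalization; only the computation with $T$ is needed. Second, in your fallback leading-term argument (which is the paper's), say that the lexicographic order has $p_0$ as its most significant entry. The monomial $m_{p^*+2\xi_0}$ also receives contributions from $m_{p^*+2\xi_0-2\xi_j}$, $j=1,2$. These indices must rank above $p^*$ for their coefficients to be known to vanish.
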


\begin{proof}
(i) It suffices to verify the identity on basis monomials $f=m_p$ and $g=m_q$. 
By Definition~\ref{def:X-operator}, 
\[
\langle\Xa_i m_p,m_q\rangle=(p_i+1)\langle m_{p+\xi_i},m_q\rangle,
\]
which equals $(p_i+1)/(p+\xi_i)!=1/p!$ if $q=p+\xi_i$ and $0$ otherwise, since $(p+\xi_i)!=(p_i+1)\,p!$. 
On the other hand, $\langle m_p,\Da_i m_q\rangle=\langle m_p,m_{q-\xi_i}\rangle$ if $q_i\ge1$ and $0$ if $q_i=0$, 
which is again $1/p!$ if $q=p+\xi_i$ and $0$ otherwise. 
Therefore $\Da_i$ is the adjoint of $\Xa_i$; hence $((\Da_i)^2)^\dagger=(\Xa_i)^2$ and $(\Lapa)^\dagger=r^{2(\alpha)}$.

(ii) Let $f=\sum_p c_p m_p\neq0$ and let $p^*$ be the largest index with $c_{p^*}\neq0$. Since $r^{2(\alpha)}m_p=\sum_i(p_i+1)(p_i+2)\,m_{p+2\xi_i}$, the coefficient of $m_{p^*+2\xi_0}$ in $r^{2(\alpha)}f$ is $(p^*_0+1)(p^*_0+2)\,c_{p^*}\neq0$ from $p=p^*$. Thus $r^{2(\alpha)}f\neq0$.

(iii) The spaces $\mathcal P_n^{(\alpha)}$ are finite-dimensional and mutually orthogonal, and 
$\Lapa:\mathcal P_n^{(\alpha)}\to\mathcal P_{n-2}^{(\alpha)}$ is the adjoint of the injective map 
$r^{2(\alpha)}:\mathcal P_{n-2}^{(\alpha)}\to\mathcal P_n^{(\alpha)}$; hence it is surjective. 
Consequently, for $n\ge2$, 
\[
\dim H_n^{(\alpha)}=\dim\mathcal P_n^{(\alpha)}-\dim\mathcal P_{n-2}^{(\alpha)}
=\binom{n+2}{2}-\binom{n}{2}=2n+1,
\]
while for $n=0,1$ the Laplacian vanishes on $\mathcal P_n^{(\alpha)}$, whose dimensions are $1$ and $3$, respectively.

(iv) The arguments of (i)--(iii) apply verbatim to polynomials in the two variables $(x_1,x_2)$, 
with the Laplacian $(\Da_1)^2+(\Da_2)^2$ and $r^{2(\alpha)}$ replaced by $(\Xa_1)^2+(\Xa_2)^2$. 
For $n\ge2$ this gives $\dim H_n^{(\alpha),2}=(n+1)-(n-1)=2$; for $n=1$ all polynomials of degree one in $(x_1,x_2)$ 
are annihilated, and their space has dimension $2$.
\end{proof}

\begin{lemma}\label{lem:D0-onto}
For every $m\ge1$ the operator $\Da_0$ maps $H_m^{(\alpha)}$ onto $H_{m-1}^{(\alpha)}$. Consequently, $(\Da_0)^2$ maps $H_m^{(\alpha)}$ onto $H_{m-2}^{(\alpha)}$ for every $m\ge2$.
\end{lemma}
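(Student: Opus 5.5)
First note that $\Da_0$ commutes with $\Lapa$ on $\mathcal P^{(\alpha)}$ and lowers the combined degree by one. Hence $\Da_0$ maps $H_m^{(\alpha)}$ into $H_{m-1}^{(\alpha)}$. The content of the lemma is surjectivity, and I would prove it by a dimension count. By Lemma~\ref{lem:fischer}(iii) we have $\dim H_m^{(\alpha)}=2m+1$ and $\dim H_{m-1}^{(\alpha)}=2m-1$. By rank--nullity it therefore suffices to show that the kernel of $\Da_0$ restricted to $H_m^{(\alpha)}$ has dimension exactly $2$, or at most $2$.

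The kernel consists of those $h\in\mathcal P_m^{(\alpha)}$ with $\Da_0h=0$ and $\Lapa h=0$. A polynomial with $\Da_0h=0$ contains no monomial with $p_0\ge1$, since $\Da_0 m_p=m_{p-\xi_0}$ is injective on monomials with $p_0\ge1$ and distinct monomials map to distinct monomials. So $h$ depends only on $(x_1,x_2)$. For such $h$ the equation $\Lapa h=0$ reduces to $\big((\Da_1)^2+(\Da_2)^2\big)h=0$. Thus the kernel is exactly $H_m^{(\alpha),2}$, which has dimension $2$ for $m\ge1$ by Lemma~\ref{lem:fischer}(iv). Rank--nullity then gives
\[
\dim \Da_0\big(H_m^{(\alpha)}\big)=(2m+1)-2=2m-1=\dim H_{m-1}^{(\alpha)},
\]
so $\Da_0:H_m^{(\alpha)}\to H_{m-1}^{(\alpha)}$ is onto.

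For the second claim, compose the two surjections $H_m^{(\alpha)}\to H_{m-1}^{(\alpha)}\to H_{m-2}^{(\alpha)}$. Both steps are allowed, since $m\ge2$ gives $m-1\ge1$.

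The only delicate points are small-degree bookkeeping and identifying the kernel precisely. The case $m=1$ fits the formula: $H_0^{(\alpha)}$ has dimension $1$, $H_1^{(\alpha)}=\mathcal P_1^{(\alpha)}$, and the kernel is spanned by $m_1(x_1),m_1(x_2)$, of dimension $2$. I would also make sure that the two-variable space in Lemma~\ref{lem:fischer}(iv) is of combined degree $m$, so that it coincides with the kernel. If one prefers to avoid relying on (iv), an alternative is an explicit construction. Given $g\in H_{m-1}^{(\alpha)}$, take an antiderivative in $x_0$ via $\Xa_0$-type integration and correct it by a series in $(\Da_0)$-inverse applied to $-(\Da_1^2+\Da_2^2)$. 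However, the dimension argument is the cleaner route, and I would use it.
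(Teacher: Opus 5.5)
Your proposal is correct and is essentially the paper's own proof. The paper also identifies $\ker\Da_0\cap H_m^{(\alpha)}$ with the two-variable space $H_m^{(\alpha),2}$ and obtains surjectivity by rank--nullity from the dimension counts $2m+1$, $2$, and $2m-1$ in Lemma~\ref{lem:fischer}(iii),(iv); the statement for $(\Da_0)^2$ then follows by composing the two surjections, as you do.
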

\begin{proof}
Since the operators $\Da_j$ commute and lower the degree by one, $\Da_0$ maps $H_m^{(\alpha)}$ into $H_{m-1}^{(\alpha)}$. An element of $H_m^{(\alpha)}$ annihilated by $\Da_0$ does not depend on $x_0$, because $\Da_0m_p=m_{p-\xi_0}$ vanishes exactly when $p_0=0$; hence $\ker\Da_0\cap H_m^{(\alpha)}=H_m^{(\alpha),2}$, which has dimension $2$ by Lemma~\ref{lem:fischer}(iv). The rank of $\Da_0$ on $H_m^{(\alpha)}$ is therefore $(2m+1)-2=2m-1=\dim H_{m-1}^{(\alpha)}$ by Lemma~\ref{lem:fischer}(iii).
\end{proof}

\begin{lemma}[Euler--Poincar\'e]\label{lem:euler-poincare}
Let $n\ge1$ and let $\mathbf g=g_0\mathbf e_0+g_1\mathbf e_1+g_2\mathbf e_2\in\mathcal A$ with $g_i\in\mathcal P_{n-1}^{(\alpha)}$ satisfy $\Da_ig_j=\Da_jg_i$ for all $i,j$ (equivalently, $\curla\mathbf g=\mathbf 0$). Then
\[
  \grada\,\Psi_3=\mathbf g,
  \qquad \Psi_3:=\frac1n\sum_{j=0}^2\Xa_jg_j\in\mathcal P_n^{(\alpha)}.
\]
\end{lemma}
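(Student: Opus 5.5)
We compute $\Da_i\Psi_3$ directly, using only the commutation relations $\Da_i\Xa_j-\Xa_j\Da_i=\delta_{ij}$ on $\mathcal P^{(\alpha)}$ (Lemma~\ref{lemma:X-weyl} together with the remark following it), the curl-free hypothesis, and the degree lemma (Lemma~\ref{lem:degree}). Membership $\Psi_3\in\mathcal P_n^{(\alpha)}$ is immediate from Remark~\ref{rem:X-grading}, since each $\Xa_j$ raises the combined degree of $g_j\in\mathcal P_{n-1}^{(\alpha)}$ by one.

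Fix $i\in\{0,1,2\}$. By linearity and the commutation relation,
\[
  \Da_i\sum_{j=0}^2\Xa_jg_j=\sum_{j=0}^2\bigl(\Xa_j\Da_ig_j+\delta_{ij}g_j\bigr)=\sum_{j=0}^2\Xa_j\Da_ig_j+g_i .
\]
Now we use the hypothesis $\Da_ig_j=\Da_jg_i$ to rewrite the sum:
\[
  \sum_{j=0}^2\Xa_j\Da_ig_j=\sum_{j=0}^2\Xa_j\Da_jg_i=E^{(\alpha)}[g_i].
\]
Since $g_i\in\mathcal P_{n-1}^{(\alpha)}$, Lemma~\ref{lem:degree} gives $E^{(\alpha)}[g_i]=(n-1)g_i$. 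Hence $\Da_i\sum_j\Xa_jg_j=n\,g_i$. Dividing by $n\ge1$ yields $\Da_i\Psi_3=g_i$ for each $i$, and therefore $\grada\Psi_3=\sum_i\mathbf e_i\Da_i\Psi_3=\mathbf g$.

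For the parenthetical equivalence, we expand $\curla\mathbf g$ componentwise. Its three components are exactly $\Da_1g_2-\Da_2g_1$, $\Da_2g_0-\Da_0g_2$ and $\Da_0g_1-\Da_1g_0$. These vanish if and only if $\Da_ig_j=\Da_jg_i$ for all $i\neq j$, and for $i=j$ the condition is trivial.

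No step is genuinely hard. The only point requiring care is applying the commutator in the correct order, $\Da_i\Xa_j=\Xa_j\Da_i+\delta_{ij}$, for both $i=j$ and $i\neq j$. This holds on all of $\mathcal P^{(\alpha)}$ because the $\Xa_j$ and $\Da_i$ are defined on the monomial basis and act factorwise.
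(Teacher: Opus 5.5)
Your proposal is correct and follows the same route as the paper's proof. Both apply the commutation relation $\Da_i\Xa_j-\Xa_j\Da_i=\delta_{ij}$, then the hypothesis $\Da_ig_j=\Da_jg_i$ to recognize $E^{(\alpha)}[g_i]$, then the degree lemma to get $n\,g_i$; your added checks of $\Psi_3\in\mathcal P_n^{(\alpha)}$ and of the curl equivalence are correct and fill in details the paper leaves implicit.
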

\begin{proof}
By the commutation relations above and the assumption on $\mathbf g$,
\[
  \Da_i\sum_{j=0}^2\Xa_jg_j=g_i+\sum_{j=0}^2\Xa_j\Da_ig_j=g_i+\sum_{j=0}^2\Xa_j\Da_jg_i=g_i+E^{(\alpha)}[g_i]=n\,g_i ,
\]
where the last equality is Lemma~\ref{lem:degree} applied to $g_i\in\mathcal P_{n-1}^{(\alpha)}$.
\end{proof}

\subsection{Fractal-Homogeneous Solutions}
\label{subsec:Sn}

\begin{definition}[Fractal-homogeneous solutions]\label{def:Sn}
Let
\[
  S_n^{(\alpha)}
  := \bigl\{\mathbf u\in\mathcal P_{n,\mathcal A}^{(\alpha)} : L^{(\alpha)}[\mathbf u]=\mathbf 0\bigr\},
\]
the space of fractal-homogeneous solutions of \eqref{eq:lame_homogeneous} of combined degree $n$, where $L^{(\alpha)}$ is as in \eqref{eq:Lalpha-def}.
\end{definition}

\begin{lemma}[Dilatation is harmonic]\label{lem:divharm}
If $\mathbf u\in S_n^{(\alpha)}$ {with $n\ge1$}, then the fractal dilatation 
\[
e^{(\alpha)} := \operatorname{div}^{(\alpha)}\mathbf u \in H_{n-1}^{(\alpha)},
\]
i.e., it is fractal-harmonic of combined degree $n-1$.
\end{lemma}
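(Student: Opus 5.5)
The plan is to apply $\diva$ to the Lam\'e equation and use the commutation relations recorded before Subsection~\ref{subsec:almansi}. First, the degree claim: each component $u_j\in\mathcal P_n^{(\alpha)}$, and by rule~\eqref{rule:monomial} every $\Da_j$ maps $\mathcal P_n^{(\alpha)}$ into $\mathcal P_{n-1}^{(\alpha)}$. Hence $e^{(\alpha)}=\sum_j\Da_ju_j\in\mathcal P_{n-1}^{(\alpha)}$, which is meaningful since $n\ge1$.

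For harmonicity, write the homogeneous equation as
\[
  \Lapa\mathbf u+\caa\,\grada e^{(\alpha)}=\mathbf 0,
\]
which we may do by dividing $L^{(\alpha)}[\mathbf u]=\mathbf 0$ by $\mua>0$. We then apply $\diva$ to both sides. The operators $\Da_j$ commute on $\mathcal P^{(\alpha)}$, so $\diva\Lapa\mathbf u=\sum_j\Da_j\Lapa u_j=\Lapa\sum_j\Da_ju_j=\Lapa e^{(\alpha)}$. Likewise $\diva\grada=\Lapa$, so the second term becomes $\caa\Lapa e^{(\alpha)}$. Together these give $(1+\caa)\,\Lapa e^{(\alpha)}=0$.

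It remains to check that $1+\caa\neq0$. Indeed $1+\caa=(\laa+2\mua)/\mua$, and we have $\laa+\mua>\tfrac13\mua>0$ from the remarks preceding \eqref{eq:constants}. Hence $1+\caa>1$, so $\Lapa e^{(\alpha)}=0$ and $e^{(\alpha)}\in H_{n-1}^{(\alpha)}$.

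There is no real obstacle here. The only points needing care are that the commutation of the $\Da_j$ holds on the polynomial sector, which is ensured by the multi-index rule $\Da_i\Da_jm_p=m_{p-\xi_i-\xi_j}$, and the positivity of $1+\caa$, which follows from Postulate~\ref{post:hooke}(i).
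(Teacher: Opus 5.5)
Your proof is correct and follows essentially the same route as the paper's: apply $\diva$ to the Lam\'e equation, commute it past $\Lapa$, use $\diva\grada=\Lapa$, and conclude from positivity of the resulting coefficient. The only difference is that you divide by $\mua$ first, so your coefficient is $1+\caa=(\laa+2\mua)/\mua$ where the paper works directly with $\laa+2\mua>0$.
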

\begin{proof}
Apply $\operatorname{div}^{(\alpha)}$ to the equation $L^{(\alpha)}[\mathbf u]=\mathbf 0$ in \eqref{eq:Lalpha-def}. Using the commutation $\operatorname{div}^{(\alpha)}\Delta^{(\alpha)}=\Delta^{(\alpha)}\operatorname{div}^{(\alpha)}$ (both sides are sums of commuting $D_j^{(\alpha)}D_j^{(\alpha)}$'s) and the identity $\operatorname{div}^{(\alpha)}\operatorname{grad}^{(\alpha)}=\Delta^{(\alpha)}$, we obtain
\[
  (\lambda^{(\alpha)}+2\mu^{(\alpha)})\,\Delta^{(\alpha)}e^{(\alpha)} = 0.
\]
By Postulate~\ref{post:hooke}(i), $\lambda^{(\alpha)}+2\mu^{(\alpha)}=\tfrac13(3\lambda^{(\alpha)}+2\mu^{(\alpha)})+\tfrac43\mu^{(\alpha)}>0$, hence $\Delta^{(\alpha)}e^{(\alpha)}=0$. Since each $D_i^{(\alpha)}$ lowers combined degree by one (rule (M)), we have $e^{(\alpha)}\in\mathcal P_{n-1}^{(\alpha)}$, and thus $e^{(\alpha)}\in H_{n-1}^{(\alpha)}$.
\end{proof}

\begin{corollary}\label{cor:dilatation}
Let $n\ge0$, $\mathbf u\in S_n^{(\alpha)}$ and $e^{(\alpha)}:=\diva\mathbf u$. Then $\Lapa\mathbf u=-\caa\,\grada e^{(\alpha)}$.
\end{corollary}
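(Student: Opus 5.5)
This is a direct rewriting of the defining equation. By \eqref{eq:Lalpha-def} and the definition of $\caa$, we have
\[
L^{(\alpha)}[\mathbf u]=\mua\bigl(\Lapa\mathbf u+\caa\,\grada\diva\mathbf u\bigr).
\]
So the plan is to take $\mathbf u\in S_n^{(\alpha)}$, use $L^{(\alpha)}[\mathbf u]=\mathbf 0$, and divide by the nonzero constant $\mua$.

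First, check that $\mua\neq0$ and that $\caa$ is well defined. Postulate~\ref{post:hooke}(i) gives $\mua>0$ and $\laa+\mua>\tfrac13\mua>0$, as already recorded before \eqref{eq:constants}.

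Second, with $e^{(\alpha)}=\diva\mathbf u$, the equation $\mua\bigl(\Lapa\mathbf u+\caa\,\grada e^{(\alpha)}\bigr)=\mathbf 0$ yields $\Lapa\mathbf u=-\caa\,\grada e^{(\alpha)}$ immediately.

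Third, handle the low-degree cases. For $n=0$ both sides are zero: $\mathbf u$ is constant, so $e^{(\alpha)}=0$ and $\Lapa\mathbf u=\mathbf 0$. The identity then holds trivially, and there is no conflict with the restriction $n\ge1$ in Lemma~\ref{lem:divharm}, which is not needed here. For $n=1$ both sides also vanish, since $e^{(\alpha)}$ is constant.

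There is no real obstacle. The only point to note is that all operators act on $\mathcal P^{(\alpha)}_{\mathcal A}$, where $\grada$ and $\diva$ are well defined by \eqref{eq:grad-div}. The harmonicity in Lemma~\ref{lem:divharm} is not required for this corollary; it will only be used later to feed $e^{(\alpha)}\in H_{n-1}^{(\alpha)}$ into Lemma~\ref{lem:D0-onto} or Lemma~\ref{lem:euler-poincare}.
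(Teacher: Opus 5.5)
Your proposal is correct and is the paper's argument: rewrite $L^{(\alpha)}[\mathbf u]=\mathbf 0$ via \eqref{eq:Lalpha-def} as $\mua\bigl(\Lapa\mathbf u+\caa\,\grada e^{(\alpha)}\bigr)=\mathbf 0$ and divide by $\mua>0$. Your remarks on the cases $n=0,1$ are accurate but not needed, since the division argument already covers every $n\ge0$.
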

\begin{proof}
By \eqref{eq:Lalpha-def}, $L^{(\alpha)}[\mathbf u]=\mathbf 0$ reads $\mua\Lapa\mathbf u+(\laa+\mua)\grada e^{(\alpha)}=\mathbf 0$; divide by $\mua$.
\end{proof}

\begin{lemma}\label{lem:K}
For $\boldsymbol\varphi=\varphi_0\mathbf e_0+\varphi_1\mathbf e_1+\varphi_2\mathbf e_2\in\mathcal A$ with fractal-harmonic components in $\mathcal P^{(\alpha)}$, let
\[
  \mathcal K^{(\alpha)}[\boldsymbol\varphi]:=\kaa\,\boldsymbol\varphi-\grada\Bigl[\sum_{i=0}^2\Xa_i\varphi_i\Bigr].
\]
Then $\Lapa\,\mathcal K^{(\alpha)}[\boldsymbol\varphi]=-2\,\grada\diva\boldsymbol\varphi$, $\ \diva\,\mathcal K^{(\alpha)}[\boldsymbol\varphi]=(\kaa-2)\,\diva\boldsymbol\varphi$, and consequently
\[
  L^{(\alpha)}\bigl[\mathcal K^{(\alpha)}[\boldsymbol\varphi]\bigr] = 
  \Lapa\,\mathcal K^{(\alpha)}[\boldsymbol\varphi]+\caa\,\grada\diva\,\mathcal K^{(\alpha)}[\boldsymbol\varphi]=\mathbf 0.
\]
Moreover, $L^{(\alpha)}[\grada\varphi]=\mathbf 0$ for every fractal-harmonic $\varphi\in\mathcal P^{(\alpha)}$.
\end{lemma}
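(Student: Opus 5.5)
The plan is a direct computation on $\mathcal P^{(\alpha)}$, built from the commutation rules $\Da_j\Xa_i-\Xa_i\Da_j=\delta_{ij}$, the pairwise commutation of the $\Da_j$, and Lemma~\ref{lem:leibniz-X} together with Corollary~\ref{cor:leibniz-vec}. Put $\Phi:=\sum_{i=0}^2\Xa_i\varphi_i$, so that $\mathcal K^{(\alpha)}[\boldsymbol\varphi]=\kaa\boldsymbol\varphi-\grada\Phi$.

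For the Laplacian, first use $\Lapa\grada=\grada\Lapa$ and $\Lapa\varphi_i=0$ componentwise, which give
\[
\Lapa\mathcal K^{(\alpha)}[\boldsymbol\varphi]=-\grada\Lapa\Phi .
\]
Corollary~\ref{cor:leibniz-vec}, together with the harmonicity of each $\varphi_i$, gives $\Lapa\Phi=2\diva\boldsymbol\varphi$. Substituting yields the first identity, $\Lapa\mathcal K^{(\alpha)}[\boldsymbol\varphi]=-2\grada\diva\boldsymbol\varphi$.

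For the divergence, write $\diva\mathcal K^{(\alpha)}[\boldsymbol\varphi]=\kaa\diva\boldsymbol\varphi-\diva\grada\Phi$. The identity $\diva\grada=\Lapa$ turns the second term into $\Lapa\Phi=2\diva\boldsymbol\varphi$. This gives $(\kaa-2)\diva\boldsymbol\varphi$.

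Next, combine the two identities with the reduced form $L^{(\alpha)}=\mua(\Lapa+\caa\grada\diva)$ from \eqref{eq:Lalpha-def}:
\[
L^{(\alpha)}[\mathcal K^{(\alpha)}\boldsymbol\varphi]=\mua\bigl(-2+\caa(\kaa-2)\bigr)\grada\diva\boldsymbol\varphi .
\]
This vanishes by \eqref{eq:constants}, since $\caa(\kaa-2)=2$. The statement's displayed equality sets $L^{(\alpha)}$ equal to the bracket without the factor $\mua$; I would read this as an equality up to the positive factor $\mua$ and say so explicitly in the write-up.

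For the final claim, take a fractal-harmonic $\varphi$. Then $\Lapa\grada\varphi=\grada\Lapa\varphi=0$, and also $\grada\diva\grada\varphi=\grada\Lapa\varphi=0$, so $L^{(\alpha)}[\grada\varphi]=0$.

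There is no real obstacle here. The only point needing care is that Corollary~\ref{cor:leibniz-vec} is stated for $\mathcal A$-valued $\mathbf V$ with polynomial components, and here $\varphi_i\in\mathcal P^{(\alpha)}$ need not be homogeneous. The corollary applies by linearity regardless, because every operator involved is defined on all of $\mathcal P^{(\alpha)}$.
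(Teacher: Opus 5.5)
Your proposal is correct and follows essentially the same route as the paper. Corollary~\ref{cor:leibniz-vec} together with harmonicity gives $\Lapa\bigl[\sum_i\Xa_i\varphi_i\bigr]=2\diva\boldsymbol\varphi$, and then $\Lapa\grada=\grada\Lapa$, $\diva\grada=\Lapa$ and $\caa(\kaa-2)=2$ finish the argument. Your remark about the missing factor $\mua$ in the displayed equality is accurate and harmless, since $L^{(\alpha)}=\mua(\Lapa+\caa\grada\diva)$ and the claim is only that the expression vanishes.
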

\begin{proof}
By Corollary~\ref{cor:leibniz-vec} and $\Lapa\varphi_i=0$, we have
\[
\Lapa\Bigl[\sum_{i=0}^2 \Xa_i \varphi_i\Bigr] = 2\,\diva \boldsymbol\varphi.
\]
Since $\Lapa\grada = \grada\Lapa$ and $\diva\grada = \Lapa$, it follows that
\[
\Lapa \mathcal K^{(\alpha)}[\boldsymbol\varphi]
= \kaa \Lapa \boldsymbol\varphi - \grada \Lapa\Bigl[\sum_{i=0}^2 \Xa_i \varphi_i\Bigr]
= -2\,\grada \diva \boldsymbol\varphi
\]
and
\[
\diva \mathcal K^{(\alpha)}[\boldsymbol\varphi]
= \kaa \diva \boldsymbol\varphi - \Lapa\Bigl[\sum_{i=0}^2 \Xa_i \varphi_i\Bigr]
= (\kaa - 2)\,\diva \boldsymbol\varphi.
\]
By \eqref{eq:constants}, $-2 + \caa(\kaa - 2) = 0$, which proves the first claim. {From this we conclude that} 
\begin{equation}
  \Lapa\grada\varphi = \grada\Lapa\varphi = \mathbf 0,
  \qquad
  \diva\grada\varphi = \Lapa\varphi = 0.
\end{equation}
\end{proof}

\subsection{A Fractal Kolosov--Muskhelishvili Formula}
\label{subsec:km}

\begin{definition}[The sharp map]\label{def:sharp}
\cite{Bock2014}. For $f=f_0+f_1\mathbf e_1+f_2\mathbf e_2+f_3\mathbf e_3\in\mathbb H$, define
\begin{equation}\label{eq:sharp}
  f^\sharp := f_0 - f_1\mathbf e_1 - f_2\mathbf e_2 + f_3\mathbf e_3.
\end{equation}
\end{definition}

A direct computation shows that $f\in\ker\overline\partial$ if and only if $f^\sharp\in\ker\partial$; thus the sharp map interchanges monogenic and anti-monogenic functions.

By \cite{Bock2014}, every displacement field $\mathbf u$ solving the classical homogeneous Lam\'e equation is the $\mathcal A$-part of an $\mathbb H$-valued field $\tilde{\mathbf u}$ of the form
\begin{equation}\label{eq:km-classical}
  2\mu\,\tilde{\mathbf u}
  := 4(1-\nu)\,\Phi 
     - \tfrac12\,\overline\partial\bigl[\bar x\,\Phi + \bar\Phi\,x\bigr] 
     - \Psi^\sharp,
\end{equation}
with $\Phi\in\ker\overline\partial\perp(\ker\overline\partial\cap\ker\partial)$ and $\Psi\in\ker\overline\partial$. This is the classical Kolosov--Muskhelishvili representation.

\begin{remark}[Position term of the classical formula]\label{rem:km-position}
Since $x\in\mathcal A$, $\bar x\,\Phi+\bar\Phi\,x=2\,\mathrm{Sc}(\bar x\,\Phi)=2\,(x_0\Phi_0+x_1\Phi_1+x_2\Phi_2)$, so that $\tfrac12\overline\partial[\bar x\,\Phi+\bar\Phi\,x]=\operatorname{grad}\bigl(\mathbf x\cdot\Phi_{\mathcal A}\bigr)$ with $\mathbf x\cdot\Phi_{\mathcal A}=\sum_{i=0}^2x_i\Phi_i$. Hence the $\mathcal A$-part of \eqref{eq:km-classical} reads $2\mu\mathbf u=4(1-\nu)\Phi_{\mathcal A}-\operatorname{grad}(\mathbf x\cdot\Phi_{\mathcal A})-\Psi^\sharp_{\mathcal A}$, and the fractal formula \eqref{eq:km-fractal} below is obtained by replacing $x_i\Phi_i$ with $X_i^{(\alpha)}[\Phi_i]$.
\end{remark}

In components, the multiplication table of $\mathbb H$ yields, for $f=f_0+f_1\mathbf e_1+f_2\mathbf e_2+f_3\mathbf e_3$,
\begin{equation}\label{eq:dbar-components}
\begin{aligned}
  \dbara f&=(\Da_0f_0-\Da_1f_1-\Da_2f_2)+(\Da_0f_1+\Da_1f_0+\Da_2f_3)\,\mathbf e_1\\
  &\quad+(\Da_0f_2+\Da_2f_0-\Da_1f_3)\,\mathbf e_2+(\Da_0f_3+\Da_1f_2-\Da_2f_1)\,\mathbf e_3,
\end{aligned}
\end{equation}
and $\dela f$ is obtained from \eqref{eq:dbar-components} by reversing the sign of every term that contains $\Da_1$ or $\Da_2$.

\begin{lemma}[Elementary properties of the sharp map]\label{lem:sharp-facts}
The following hold:
\begin{enumerate}
  \item If $\Psi=\sum_{i=0}^3 \Psi_i\,\mathbf e_i$ is fractal-monogenic, then the $\mathcal A$-part of its sharp, $\Psi^\sharp_{\mathcal A}:=\Psi_0 - \Psi_1\,\mathbf e_1 - \Psi_2\,\mathbf e_2$, satisfies $\operatorname{div}^{(\alpha)}\Psi^\sharp_{\mathcal A}=0$ exactly.
  \item For every quaternion $f$, $(f^\sharp)_{\mathcal A}=(f_{\mathcal A})^\sharp$ and $\bigl((f^\sharp)_{\mathcal A}\bigr)^\sharp=f_{\mathcal A}$. Moreover, $g$ is fractal-anti-monogenic ($\partial^{(\alpha)}[g]=0$) if and only if $g^\sharp$ is fractal-monogenic ($\overline\partial^{(\alpha)}[g^\sharp]=0$).
\end{enumerate}
\end{lemma}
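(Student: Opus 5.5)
Both parts come down to writing out the component formula \eqref{eq:dbar-components} and reading off individual components, using only $\mathbb R$-linearity of the $\Da_j$.

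For item 1, let $\Psi=\sum_{i=0}^3\Psi_i\mathbf e_i$ be fractal-monogenic, so every component in \eqref{eq:dbar-components} with $f=\Psi$ vanishes. The scalar component gives
\[
\Da_0\Psi_0-\Da_1\Psi_1-\Da_2\Psi_2=0 .
\]
By the definition \eqref{eq:grad-div} of $\diva$, applied to $\Psi^\sharp_{\mathcal A}=\Psi_0-\Psi_1\mathbf e_1-\Psi_2\mathbf e_2$ with components $(V_0,V_1,V_2)=(\Psi_0,-\Psi_1,-\Psi_2)$, we get
\[
\diva\Psi^\sharp_{\mathcal A}=\Da_0\Psi_0-\Da_1\Psi_1-\Da_2\Psi_2 ,
\]
which is exactly this scalar component and hence zero. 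The identity is exact: we only use the scalar part of the monogenicity condition, and no commutation of derivatives is needed.

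For item 2, the first two claims are coordinate bookkeeping. Write $f_{\mathcal A}=f_0+f_1\mathbf e_1+f_2\mathbf e_2$. Both $(f^\sharp)_{\mathcal A}$ and $(f_{\mathcal A})^\sharp$ equal $f_0-f_1\mathbf e_1-f_2\mathbf e_2$, since $\sharp$ acts diagonally on the basis and $\mathcal A$-projection just deletes the $\mathbf e_3$-coordinate. Applying $\sharp$ again flips the two signs back and returns $f_{\mathcal A}$, because $\sharp$ is an involution.

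For the equivalence, let $g=\sum g_i\mathbf e_i$, so $g^\sharp$ has components $(g_0,-g_1,-g_2,g_3)$. Substitute these into \eqref{eq:dbar-components}. The components of $\dbara[g^\sharp]$ become
\[
\begin{aligned}
&\Da_0g_0+\Da_1g_1+\Da_2g_2,\qquad
-\Da_0g_1+\Da_1g_0+\Da_2g_3,\\
&-\Da_0g_2+\Da_2g_0-\Da_1g_3,\qquad
\Da_0g_3-\Da_1g_2+\Da_2g_1 .
\end{aligned}
\]
Next compute $\dela g$ by flipping the sign of every term containing $\Da_1$ or $\Da_2$ in \eqref{eq:dbar-components}. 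Its components are
\[
\begin{aligned}
&\Da_0g_0+\Da_1g_1+\Da_2g_2,\qquad
\Da_0g_1-\Da_1g_0-\Da_2g_3,\\
&\Da_0g_2-\Da_2g_0+\Da_1g_3,\qquad
\Da_0g_3-\Da_1g_2+\Da_2g_1 .
\end{aligned}
\]
Comparing the two lists gives
\[
\dbara[g^\sharp]=\bigl(\dela g\bigr)^\sharp ,
\]
since the scalar and $\mathbf e_3$ components agree and the $\mathbf e_1,\mathbf e_2$ components are negatives. As $\sharp$ is a bijection fixing $0$, $\dela g=0$ holds if and only if $\dbara[g^\sharp]=0$.

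There is no genuine obstacle here. The only step needing care is the sign bookkeeping in the $\mathbf e_1,\mathbf e_2$ components. I will double-check it by testing with $g=m_1(x_1)+m_1(x_0)\mathbf e_1$, which is anti-monogenic, and verifying that $g^\sharp=m_1(x_1)-m_1(x_0)\mathbf e_1=z_{1,1}^{(\alpha)}$ is fractal-monogenic, as stated in Section~\ref{sec:prelim}.
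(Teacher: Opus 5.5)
Your proposal is correct and follows essentially the paper's proof: item 1 reads off the scalar component of \eqref{eq:dbar-components}, and item 2 is the same componentwise sign comparison. The only difference is that you verify $\dbara[g^\sharp]=(\dela g)^\sharp$ directly, which gives the equivalence at once, whereas the paper proves the equivalent identity $\dela[f^\sharp]=(\dbara f)^\sharp$ and then uses that $\sharp$ is an involution.
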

\begin{proof}
(1) By \eqref{eq:dbar-components}, the scalar part of $\dbara\Psi=0$ is 
$\Da_0\Psi_0-\Da_1\Psi_1-\Da_2\Psi_2=0$, which is precisely $\diva\Psi^\sharp_{\mathcal A}=0$ 
because $\Psi^\sharp_{\mathcal A}=\Psi_0-\Psi_1\mathbf e_1-\Psi_2\mathbf e_2$.

(2) The first two identities follow directly from the definition of $\sharp$, which changes the signs 
of the $\mathbf e_1$- and $\mathbf e_2$-components only. For the last identity, it suffices to show 
$\dela[f^\sharp]=(\dbara f)^\sharp$ for every $f$. Set $g:=f^\sharp=f_0-f_1\mathbf e_1-f_2\mathbf e_2+f_3\mathbf e_3$. The components of $\dela g$ are
\[
\begin{aligned}
  &\Da_0f_0-\Da_1f_1-\Da_2f_2,\quad -(\Da_0f_1+\Da_1f_0+\Da_2f_3),\\
  &-(\Da_0f_2+\Da_2f_0-\Da_1f_3),\quad \Da_0f_3+\Da_1f_2-\Da_2f_1,
\end{aligned}
\]
which, by \eqref{eq:dbar-components}, are exactly the components of $(\dbara f)^\sharp$. 
Hence $\dbara f=0$ if and only if $\dela[f^\sharp]=0$; since $\sharp$ is an involution, 
$\dela g=0$ if and only if $\dbara[g^\sharp]=0$.
\end{proof}

\begin{theorem}[The fractal Kolosov--Muskhelishvili representation]\label{thm:X-km}
{Let $n\ge0$.}
\begin{enumerate}[(a)]
  \item {Let $\Phi,\Psi$ be fractal-monogenic with components in $\mathcal P_n^{(\alpha)}$, and define $\mathbf u$ by}
\begin{equation}\label{eq:km-fractal}
  2\mu^{(\alpha)}\mathbf u = 4(1-\nu^{(\alpha)})\,\Phi_{\mathcal A} - \operatorname{grad}^{(\alpha)}\Bigl[\sum_{i=0}^2 X_i^{(\alpha)}[\Phi_i]\Bigr] - \Psi_{\mathcal A}^\sharp
\end{equation}
{Then $\mathbf u\in S_n^{(\alpha)}$.}
  \item {Conversely, every $\mathbf u\in S_n^{(\alpha)}$ is of the form \eqref{eq:km-fractal} for some fractal-monogenic $\Phi,\Psi$ with components in $\mathcal P_n^{(\alpha)}$; moreover, $\Phi$ can be chosen as $\Phi=\dela[G]$ with $G\in H_{n+1}^{(\alpha)}$.}
\end{enumerate}
\end{theorem}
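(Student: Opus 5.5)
The plan is to treat the two directions separately. Part (a) reduces to Lemma~\ref{lem:K} together with Lemma~\ref{lem:sharp-facts}. Part (b) is a direct construction: first choose $G$ so that the dilatation matches, then recover $\Psi$ from the divergence-free harmonic remainder using the Euler--Poincar\'e Lemma~\ref{lem:euler-poincare}.

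\emph{Part (a).} If $\Phi$ is fractal-monogenic, then $\Lapa\Phi=\dela\dbara\Phi=0$ componentwise. Hence $\Phi_0,\Phi_1,\Phi_2\in H_n^{(\alpha)}$, and likewise for $\Psi$. Since $4(1-\nua)=\kaa$, the right-hand side of \eqref{eq:km-fractal} equals $\mathcal K^{(\alpha)}[\Phi_{\mathcal A}]-\Psi^\sharp_{\mathcal A}$. Lemma~\ref{lem:K} gives $L^{(\alpha)}[\mathcal K^{(\alpha)}[\Phi_{\mathcal A}]]=\mathbf 0$. For the second term, $\Psi^\sharp_{\mathcal A}$ has harmonic components and $\diva\Psi^\sharp_{\mathcal A}=0$ by Lemma~\ref{lem:sharp-facts}(1), so both terms of $L^{(\alpha)}$ vanish on it. Finally, the degree count is immediate: $\Xa_i$ raises the combined degree by one and $\grada$ lowers it by one, so $\mathbf u\in\mathcal P^{(\alpha)}_{n,\mathcal A}$.

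\emph{Part (b), choice of $G$.} Let $\mathbf u\in S_n^{(\alpha)}$ and set $e^{(\alpha)}=\diva\mathbf u$; by Lemma~\ref{lem:divharm}, $e^{(\alpha)}\in H_{n-1}^{(\alpha)}$.

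1. For $G\in H_{n+1}^{(\alpha)}$, the field $\Phi=\dela G=\Da_0G-\Da_1G\,\mathbf e_1-\Da_2G\,\mathbf e_2$ is monogenic with components in $\mathcal P_n^{(\alpha)}$. Using $\Lapa G=0$, its divergence is $\diva\Phi_{\mathcal A}=(\Da_0)^2G-(\Da_1)^2G-(\Da_2)^2G=2(\Da_0)^2G$.
2. By Lemma~\ref{lem:D0-onto} (with $m=n+1\ge2$), choose $G$ with $(\Da_0)^2G=\mua e^{(\alpha)}/(\kaa-2)$. This is legitimate because $\kaa-2>0$ by \eqref{eq:constants}. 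The case $n=0$ is trivial, since then $e^{(\alpha)}=0$ and $G=0$ works.
3. Put $\mathbf w:=2\mua\mathbf u-\mathcal K^{(\alpha)}[\Phi_{\mathcal A}]$. By Lemma~\ref{lem:K}, $\diva\mathbf w=2\mua e^{(\alpha)}-(\kaa-2)\cdot 2\mua e^{(\alpha)}/(\kaa-2)=0$.
4. By Corollary~\ref{cor:dilatation} and Lemma~\ref{lem:K}, $\Lapa\mathbf w=-2\mua\caa\grada e^{(\alpha)}+2\grada\diva\Phi_{\mathcal A}=\grada\bigl(-2\mua\caa+4\mua/(\kaa-2)\bigr)e^{(\alpha)}$. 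This vanishes because $\caa(\kaa-2)=2$.

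\emph{Part (b), construction of $\Psi$.} We need $\Psi$ monogenic with $\Psi^\sharp_{\mathcal A}=-\mathbf w$, so set $\Psi_0=-w_0$, $\Psi_1=w_1$, $\Psi_2=w_2$; it remains to find $\Psi_3$. By \eqref{eq:dbar-components}, the scalar equation of $\dbara\Psi=0$ is exactly $-\diva\mathbf w=0$, which already holds. The three remaining equations are
\[
  \Da_2\Psi_3=\Da_1w_0-\Da_0w_1,\qquad
  \Da_1\Psi_3=\Da_0w_2-\Da_2w_0,\qquad
  \Da_0\Psi_3=\Da_2w_1-\Da_1w_2 .
\]
In other words, $\grada\Psi_3=\mathbf g$ with $\mathbf g:=-\curla\mathbf w$, whose components lie in $\mathcal P_{n-1}^{(\alpha)}$. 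The condition required by Lemma~\ref{lem:euler-poincare} is $\curla\mathbf g=\mathbf 0$. By \eqref{eq:curlcurl}, $\curla\mathbf g=-(\grada\diva\mathbf w-\Lapa\mathbf w)=\mathbf 0$, using the two properties of $\mathbf w$ from steps 3 and 4. Lemma~\ref{lem:euler-poincare} then supplies $\Psi_3\in\mathcal P_n^{(\alpha)}$ (for $n\ge1$; when $n=0$, take $\Psi_3=0$, since $\mathbf g=\mathbf 0$). Rearranging the definition of $\mathbf w$ gives exactly \eqref{eq:km-fractal}.

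The main obstacle is the sign bookkeeping in this last step. One must check that the three vector equations of $\dbara\Psi=0$ really assemble into $\grada\Psi_3=-\curla\mathbf w$ under the chosen sign convention for the $\sharp$-relation, so that \eqref{eq:curlcurl} applies verbatim. I would verify this component by component against \eqref{eq:dbar-components} before invoking the Euler--Poincar\'e lemma.
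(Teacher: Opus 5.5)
Your proposal is correct and follows the paper's proof essentially step for step. You choose $G\in H_{n+1}^{(\alpha)}$ via Lemma~\ref{lem:D0-onto}, show that the remainder after subtracting $\mathcal K^{(\alpha)}[\dela G]$ is divergence-free and harmonic, and then obtain $\Psi_3$ from \eqref{eq:curlcurl} and Lemma~\ref{lem:euler-poincare}. The only difference is cosmetic: your $\mathbf w$ is the negative of the paper's, so you work with $\mathbf g=-\curla\mathbf w$ and $\Psi_0=-w_0$, $\Psi_1=w_1$, $\Psi_2=w_2$. Your sign bookkeeping against \eqref{eq:dbar-components} checks out.
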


\begin{proof}
(a) Put $\boldsymbol\varphi:=\Phi_{\mathcal A}$ and $\mathbf w:=\Psi^\sharp_{\mathcal A}=\Psi_0-\Psi_1\mathbf e_1-\Psi_2\mathbf e_2$, so that $2\mua\mathbf u=\mathcal K^{(\alpha)}[\boldsymbol\varphi]-\mathbf w$, with $\mathcal K^{(\alpha)}$ as in Lemma~\ref{lem:K}. Since $\Lapa=\dbara\dela$ acts componentwise, every component of a fractal-monogenic field is fractal-harmonic; in particular, the components of $\boldsymbol\varphi$ and $\mathbf w$ are harmonic. By Lemma~\ref{lem:K}, $L^{(\alpha)}[\mathcal K^{(\alpha)}[\boldsymbol\varphi]]=\mathbf 0$. Moreover, $\diva\mathbf w=0$ by Lemma~\ref{lem:sharp-facts}(1), so that 
\[
	L^{(\alpha)}[\mathbf w] = \Lapa\mathbf w+\caa\grada\diva\mathbf w=\mathbf 0.
\]
By linearity, $L^{(\alpha)}[\mathbf u]=\mathbf 0$. The components of $\mathbf u$ lie in $\mathcal P_n^{(\alpha)}$, since $\Xa_i$ raises and $\Da_j$ lowers the degree by one; thus $\mathbf u\in S_n^{(\alpha)}$.

(b) Let $\mathbf u\in S_n^{(\alpha)}$ and $e^{(\alpha)}:=\diva\mathbf u$. If $n=0$, then $\mathbf u$ is constant and $e^{(\alpha)}=0$; we put $G:=0$, and all steps below remain valid.

\emph{Step 1.} For $n\ge1$, $e^{(\alpha)}\in H_{n-1}^{(\alpha)}$ by Lemma~\ref{lem:divharm}. Applying Lemma~\ref{lem:D0-onto} to $\Da_0:H_{n+1}^{(\alpha)}\to H_n^{(\alpha)}$ and to $\Da_0:H_n^{(\alpha)}\to H_{n-1}^{(\alpha)}$, we find $G\in H_{n+1}^{(\alpha)}$ with
\[
  (\Da_0)^2G=\frac{\mua}{\kaa-2}\,e^{(\alpha)} ,
\]
where $\kaa-2>0$ by \eqref{eq:constants}.

\emph{Step 2.} Put $\Phi:=\dela G=\Da_0G\,\mathbf e_0-\Da_1G\,\mathbf e_1-\Da_2G\,\mathbf e_2$. Since $\dbara\dela G=\Lapa G=0$, $\Phi$ is fractal-monogenic with components in $\mathcal P_n^{(\alpha)}$, and it is $\mathcal A$-valued. Using $\Lapa G=0$,
\[
  \diva\Phi=(\Da_0)^2G-(\Da_1)^2G-(\Da_2)^2G=2(\Da_0)^2G=\frac{2\mua}{\kaa-2}\,e^{(\alpha)} .
\]
Let $\mathbf u_1:=\frac1{2\mua}\mathcal K^{(\alpha)}[\Phi]$. By Lemma~\ref{lem:K}, $\diva\mathbf u_1=\frac{\kaa-2}{2\mua}\diva\Phi=e^{(\alpha)}$ and 
\[
	\Lapa\mathbf u_1=-\frac1{\mua}\grada\diva\Phi=-\frac{2}{\kaa-2}\grada e^{(\alpha)}=-\caa\,\grada e^{(\alpha)}.
\]
By Corollary~\ref{cor:dilatation}, the field $\mathbf r:=\mathbf u-\mathbf u_1$ satisfies $\diva\mathbf r=0$ and 
\[
	\Lapa\mathbf r=-\caa\grada e^{(\alpha)}+\caa\grada e^{(\alpha)}=\mathbf 0.
\]

\emph{Step 3.} Put $\mathbf w:=-2\mua\mathbf r=w_0\mathbf e_0+w_1\mathbf e_1+w_2\mathbf e_2$; its components are harmonic, belong to $\mathcal P_n^{(\alpha)}$, and $\diva\mathbf w=0$. Let $\mathbf g:=\curla\mathbf w$. By \eqref{eq:curlcurl}, 
\[
	\curla\mathbf g=\grada\diva\mathbf w-\Lapa\mathbf w=\mathbf 0,
\]
that is, $\Da_ig_j=\Da_jg_i$ for $i,j=0,1,2$. For $n\ge1$, Lemma~\ref{lem:euler-poincare} yields $\Psi_3:=\frac1n\sum_{j=0}^2\Xa_jg_j\in\mathcal P_n^{(\alpha)}$ with $\grada\Psi_3=\mathbf g$; for $n=0$ we have $\mathbf g=\mathbf 0$ and put $\Psi_3:=0$.

\emph{Step 4.} Define $\Psi:=w_0-w_1\mathbf e_1-w_2\mathbf e_2+\Psi_3\mathbf e_3$. By \eqref{eq:dbar-components}, $\dbara\Psi=0$ is equivalent to the four equations
\[
\begin{aligned}
  &\Da_0w_0+\Da_1w_1+\Da_2w_2=0, &&\Da_2\Psi_3=\Da_0w_1-\Da_1w_0,\\
  &\Da_1\Psi_3=\Da_2w_0-\Da_0w_2, &&\Da_0\Psi_3=\Da_1w_2-\Da_2w_1,
\end{aligned}
\]
of which the first is $\diva\mathbf w=0$ and the last three are $\grada\Psi_3=\mathbf g$. Hence $\Psi$ is fractal-monogenic with components in $\mathcal P_n^{(\alpha)}$, and $\Psi^\sharp_{\mathcal A}=\mathbf w$. Finally,
\[
  2\mua\mathbf u=2\mua\mathbf u_1+2\mua\mathbf r=\mathcal K^{(\alpha)}[\Phi]-\mathbf w ,
\]
which is \eqref{eq:km-fractal} with $\Phi_{\mathcal A}=\Phi$.
\end{proof}

\begin{corollary}[Polynomial solutions of arbitrary degree]\label{cor:mixed}
A field $\mathbf u\in\mathcal P_{\mathcal A}^{(\alpha)}$ satisfies $L^{(\alpha)}[\mathbf u]=\mathbf 0$ if and only if there exist fractal-monogenic fields $\Phi,\Psi$ with components in $\mathcal P^{(\alpha)}$, of arbitrary degrees, such that \eqref{eq:km-fractal} holds. Moreover, $\Phi$ can be chosen as $\Phi=\dela[G]$ with a fractal-harmonic $G\in\mathcal P^{(\alpha)}$.
\end{corollary}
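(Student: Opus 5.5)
The plan is to reduce the corollary to Theorem~\ref{thm:X-km} by splitting into homogeneous components. The key structural fact is that $L^{(\alpha)}$ respects the grading: $\Lapa$ and $\grada\diva$ each lower the combined degree by exactly two. Consequently, if $\mathbf u=\sum_{n=0}^N \mathbf u_n$ with $\mathbf u_n\in\mathcal P_{n,\mathcal A}^{(\alpha)}$, then $L^{(\alpha)}[\mathbf u_n]\in\mathcal P_{n-2,\mathcal A}^{(\alpha)}$. Since $\mathcal P^{(\alpha)}=\bigoplus_n\mathcal P_n^{(\alpha)}$ is a direct sum, $L^{(\alpha)}[\mathbf u]=\mathbf 0$ holds if and only if $L^{(\alpha)}[\mathbf u_n]=\mathbf 0$ for every $n$, that is, $\mathbf u_n\in S_n^{(\alpha)}$ for each $n$.

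For the ``if'' direction, take fractal-monogenic $\Phi,\Psi$ with polynomial components and decompose them as $\Phi=\sum_n\Phi^{(n)}$ and $\Psi=\sum_n\Psi^{(n)}$. Each operator $\Da_j$ maps $\mathcal P_n^{(\alpha)}$ into $\mathcal P_{n-1}^{(\alpha)}$, so $\dbara\Phi=0$ splits degree by degree into $\dbara\Phi^{(n)}=0$; the same holds for $\Psi$. Hence every homogeneous piece is itself fractal-monogenic with components in $\mathcal P_n^{(\alpha)}$. The right-hand side of \eqref{eq:km-fractal} is $\mathbb R$-linear in the pair $(\Phi,\Psi)$, so $\mathbf u$ is the finite sum of the fields built from $(\Phi^{(n)},\Psi^{(n)})$. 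Each of these lies in $S_n^{(\alpha)}$ by Theorem~\ref{thm:X-km}(a), and linearity of $L^{(\alpha)}$ gives $L^{(\alpha)}[\mathbf u]=\mathbf 0$. (Alternatively, the proof of Theorem~\ref{thm:X-km}(a) never actually used homogeneity, so it applies verbatim.)

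For the ``only if'' direction, decompose $\mathbf u$ into homogeneous parts $\mathbf u_n\in S_n^{(\alpha)}$ as above. Theorem~\ref{thm:X-km}(b) gives, for each $n$, fractal-monogenic $\Phi^{(n)}=\dela[G_n]$ with $G_n\in H_{n+1}^{(\alpha)}$, together with a fractal-monogenic $\Psi^{(n)}$ of degree $n$, representing $\mathbf u_n$. Set $G:=\sum_n G_n$, $\Phi:=\sum_n\Phi^{(n)}=\dela[G]$ and $\Psi:=\sum_n\Psi^{(n)}$. These are finite sums, so $G$ is a fractal-harmonic polynomial and $\Phi,\Psi$ are fractal-monogenic with components in $\mathcal P^{(\alpha)}$. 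Linearity of \eqref{eq:km-fractal} then yields the representation of $\mathbf u$.

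The only point needing real care is the grading compatibility. The operators $\Xa_i$ and $\Da_j$ shift degree by exactly $\pm1$, and the direct-sum decomposition of $\mathcal P^{(\alpha)}$ allows both the equation $L^{(\alpha)}[\mathbf u]=\mathbf 0$ and the monogenicity conditions to be read off degree by degree. Once this is in place, the rest is bookkeeping.
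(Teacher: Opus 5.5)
Your proof is correct and follows essentially the same route as the paper. You decompose into fractal-homogeneous components, use that $\dbara$ and $L^{(\alpha)}$ shift the degree by $-1$ and $-2$ so that monogenicity and the Lam\'e equation split degree by degree, apply Theorem~\ref{thm:X-km}(a) and (b) to each component, and sum with $G=\sum_n G_n$. Your side remark is also accurate: the proof of part (a) never uses homogeneity, which gives an even shorter argument for the ``if'' direction.
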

\begin{proof}
Every $f\in\mathcal P^{(\alpha)}$ is a unique finite sum of homogeneous components $f_n\in\mathcal P_n^{(\alpha)}$. The operators $\Da_i$ lower and $\Xa_i$ raise the degree by one; hence $\dbara$, $\dela$ and $L^{(\alpha)}$ map homogeneous components to homogeneous components, with degree shifts $-1$, $-1$ and $-2$, respectively, and the map $(\Phi,\Psi)\mapsto\mathbf u$ in \eqref{eq:km-fractal}, in which $\Xa_i$ and $\grada$ compensate each other, preserves the degree.

Let $\Phi,\Psi$ be fractal-monogenic. Then $\dbara\Phi=\sum_n\dbara\Phi_n$ with $\dbara\Phi_n\in\mathcal P_{n-1}^{(\alpha)}\otimes\mathbb H$, so that all $\dbara\Phi_n$ vanish, and likewise for $\Psi$. By Theorem~\ref{thm:X-km}(a) the field $\mathbf u_n$ defined by \eqref{eq:km-fractal} from $(\Phi_n,\Psi_n)$ lies in $S_n^{(\alpha)}$, and $\mathbf u=\sum_n\mathbf u_n$ solves $L^{(\alpha)}[\mathbf u]=\mathbf 0$. 

Conversely, let $\mathbf u=\sum_n\mathbf u_n$ with $L^{(\alpha)}[\mathbf u]=\sum_nL^{(\alpha)}[\mathbf u_n]=\mathbf 0$. As $L^{(\alpha)}[\mathbf u_n]\in\mathcal P_{n-2,\mathcal A}^{(\alpha)}$, each $L^{(\alpha)}[\mathbf u_n]$ vanishes, i.e.\ $\mathbf u_n\in S_n^{(\alpha)}$. By Theorem~\ref{thm:X-km}(b), $\mathbf u_n$ is given by \eqref{eq:km-fractal} with $\Phi_n=\dela[G_n]$, $G_n\in H_{n+1}^{(\alpha)}$, and fractal-monogenic $\Psi_n$. Summing over the finitely many $n$ with $\mathbf u_n\neq\mathbf 0$, and using the linearity of \eqref{eq:km-fractal} in $(\Phi,\Psi)$, we obtain the claim with $\Phi=\sum_n\Phi_n$, $\Psi=\sum_n\Psi_n$ and $G=\sum_nG_n$.
\end{proof}

\begin{corollary}[Fractal Papkovich--Neuber representation]\label{cor:PN}
Let $n\ge0$.
\begin{enumerate}[(a)]
  \item If $\varphi\in H_{n+1}^{(\alpha)}$ and $\boldsymbol\psi\in H_{n,\mathcal A}^{(\alpha)}$, then
  \begin{equation}\label{eq:PN-fractal}
    2\mua\mathbf u=\grada\Bigl[\varphi+\sum_{i=0}^2\Xa_i\psi_i\Bigr]-\kaa\boldsymbol\psi
  \end{equation}
  belongs to $S_n^{(\alpha)}$.
  \item Every $\mathbf u\in S_n^{(\alpha)}$ is of the form \eqref{eq:PN-fractal}.
\end{enumerate}
\end{corollary}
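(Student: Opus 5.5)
The plan is to reduce both parts to Lemma~\ref{lem:K} and Theorem~\ref{thm:X-km}. The key observation is that \eqref{eq:PN-fractal} is, up to sign, the sum of $-\mathcal K^{(\alpha)}[\boldsymbol\psi]$ and a pure gradient:
\[
  \grada\Bigl[\varphi+\sum_{i=0}^2\Xa_i\psi_i\Bigr]-\kaa\boldsymbol\psi
  =\grada\varphi-\mathcal K^{(\alpha)}[\boldsymbol\psi].
\]

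\emph{Part (a).} Since $\boldsymbol\psi$ has fractal-harmonic components, Lemma~\ref{lem:K} gives $L^{(\alpha)}[\mathcal K^{(\alpha)}[\boldsymbol\psi]]=\mathbf 0$. Since $\varphi$ is fractal-harmonic, the last assertion of the same lemma gives $L^{(\alpha)}[\grada\varphi]=\mathbf 0$. By linearity, $L^{(\alpha)}[\mathbf u]=\mathbf 0$. For the degree: $\grada\varphi\in\mathcal P^{(\alpha)}_{n,\mathcal A}$ because $\varphi\in H^{(\alpha)}_{n+1}$, and $\Xa_i\psi_i\in\mathcal P^{(\alpha)}_{n+1}$ so its gradient has degree $n$; hence $\mathbf u\in S_n^{(\alpha)}$.

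\emph{Part (b).} Start from Theorem~\ref{thm:X-km}(b), which writes
\[
  2\mua\mathbf u=\mathcal K^{(\alpha)}[\Phi]-\mathbf w,
\]
where $\Phi=\dela G$ is $\mathcal A$-valued with harmonic components in $\mathcal P_n^{(\alpha)}$, and $\mathbf w=\Psi^\sharp_{\mathcal A}$ is divergence-free with harmonic components of degree $n$. The goal is to rewrite this in Papkovich--Neuber form. Set $\boldsymbol\psi:=-\Phi\in H^{(\alpha)}_{n,\mathcal A}$, so that $\mathcal K^{(\alpha)}[\Phi]=\grada\varphi_0-\mathcal K^{(\alpha)}[\boldsymbol\psi]$ with $\varphi_0=0$. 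It then remains to show that $-\mathbf w$ can be absorbed into the pair $(\varphi,\boldsymbol\psi)$.

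The natural move is to seek a correction $\boldsymbol\psi'\in H^{(\alpha)}_{n,\mathcal A}$ and $\varphi'\in H^{(\alpha)}_{n+1}$ with
\[
  \grada\Bigl[\varphi'+\sum_i\Xa_i\psi'_i\Bigr]-\kaa\boldsymbol\psi'=-\mathbf w.
\]
The choice to try first is $\boldsymbol\psi':=\mathbf w/\kaa$ together with $\varphi':=-\frac1{\kaa}\sum_i\Xa_i w_i$, so that the bracket vanishes. Then $\varphi'$ must be harmonic, and Corollary~\ref{cor:leibniz-vec} gives
\[
  \Lapa\Bigl[\sum_i\Xa_iw_i\Bigr]=\sum_i\Xa_i\Lapa w_i+2\diva\mathbf w=0,
\]
since each $w_i$ is harmonic and $\diva\mathbf w=0$. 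Hence $\varphi'\in H^{(\alpha)}_{n+1}$. Adding this correction to the previous representation gives \eqref{eq:PN-fractal} with
\[
  \varphi:=\varphi',\qquad \boldsymbol\psi:=-\Phi+\mathbf w/\kaa.
\]

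\emph{Checks, and a fallback.} The remaining verification is bookkeeping: the sign conventions, $\kaa\neq0$ (true since $\kaa>2$), and the case $n=0$, where all terms are constants and $\varphi$ is linear. I expect the main obstacle to be exactly this absorption of the divergence-free part $\mathbf w$; the harmonicity of $\sum_i\Xa_iw_i$ is what makes it work. If the signs fail to line up, the fallback is a dimension count: show that the linear map $(\varphi,\boldsymbol\psi)\mapsto\mathbf u$ from part (a) has image of dimension $\dim S_n^{(\alpha)}$, computing the latter from Theorem~\ref{thm:X-km}.
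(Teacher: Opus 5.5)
Your proposal is correct and matches the paper's proof: part (a) via the identity $\grada\varphi-\mathcal K^{(\alpha)}[\boldsymbol\psi]$ and Lemma~\ref{lem:K}, and part (b) with exactly the paper's choice $\boldsymbol\psi=-\Phi+\frac{1}{\kaa}\mathbf w$, $\varphi=-\frac1{\kaa}\sum_i\Xa_iw_i$. In both, the harmonicity of $\varphi$ comes from Corollary~\ref{cor:leibniz-vec} together with $\diva\mathbf w=0$. The dimension-count fallback is not needed, since the signs do line up.
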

\begin{proof}
(a) The right-hand side of \eqref{eq:PN-fractal} equals $\grada\varphi-\mathcal K^{(\alpha)}[\boldsymbol\psi]$, and both terms are annihilated by $L^{(\alpha)}$ by Lemma~\ref{lem:K}. Its components lie in $\mathcal P_n^{(\alpha)}$.

(b) Let $\Phi,\Psi$ be as in Theorem~\ref{thm:X-km}(b), $\mathbf w:=\Psi^\sharp_{\mathcal A}$, and set
\[
  \boldsymbol\psi:=-\Phi+\tfrac1{\kaa}\mathbf w,\qquad \varphi:=-\tfrac1{\kaa}\sum_{i=0}^2\Xa_iw_i .
\]
The components of $\boldsymbol\psi$ are harmonic. By Corollary~\ref{cor:leibniz-vec} and $\diva\mathbf w=0$ (Lemma~\ref{lem:sharp-facts}(1)), $\Lapa\varphi=-\tfrac2{\kaa}\diva\mathbf w=0$; also $\varphi\in\mathcal P_{n+1}^{(\alpha)}$. Since $\varphi+\sum_{i=0}^2\Xa_i\psi_i=-\sum_{i=0}^2\Xa_i\Phi_i$, we obtain 
\[
	\grada\Bigl[\varphi+\sum_{i=0}^2\Xa_i\psi_i\Bigr]-\kaa\boldsymbol\psi=-\grada\Bigl[\sum_{i=0}^2\Xa_i\Phi_i\Bigr]+\kaa\Phi-\mathbf w=2\mua\mathbf u.
\]
\end{proof}

\begin{remark}[Normalization and redundancy]\label{rem:e3}
Before projection to $\mathcal A$, the right-hand side of \eqref{eq:km-fractal} has the $\mathbf e_3$-component $\kaa\Phi_3-\Psi_3$, since the position term is $\mathcal A$-valued. In \cite{Bock2014} the potentials are normalized so that this component vanishes and $\Phi$ is chosen orthogonal to the monogenic constants, which removes part of the non-uniqueness of the representation; Theorem~\ref{thm:X-km} does not impose these conditions.
\end{remark}

\begin{remark}[Galerkin representation]
Let $\mathbf X^{(\alpha)}:=(\Xa_0,\Xa_1,\Xa_2)$ and 
\[
	T_G^{(\alpha)}[\mathbf G]:=2(1-\nua)\Lapa\mathbf G-\grada\diva\mathbf G
\] 
for $\mathbf G\in\mathcal P_{\mathcal A}^{(\alpha)}$. A direct computation gives $\Lapa T_G^{(\alpha)}[\mathbf G]=2(1-\nua)(\Lapa)^2\mathbf G-\grada\diva\Lapa\mathbf G$ and $\diva T_G^{(\alpha)}[\mathbf G]=(1-2\nua)\diva\Lapa\mathbf G$, so that, by $\caa(1-2\nua)=1$, 
\[
	L^{(\alpha)}[T_G^{(\alpha)}\mathbf G]=2\mua(1-\nua)(\Lapa)^2\mathbf G.
\] 
Moreover, $T_G^{(\alpha)}:\mathcal P_{n+2,\mathcal A}^{(\alpha)}\to\mathcal P_{n,\mathcal A}^{(\alpha)}$ is onto: with respect to the Fischer inner product of Lemma~\ref{lem:fischer} its adjoint is $\mathbf w\mapsto2(1-\nua)r^{2(\alpha)}\mathbf w-\mathbf X^{(\alpha)}\bigl(\mathbf X^{(\alpha)}\cdot\mathbf w\bigr)$, and if this vanishes then applying $\mathbf X^{(\alpha)}\cdot$ gives $(1-2\nua)\,r^{2(\alpha)}\bigl(\mathbf X^{(\alpha)}\cdot\mathbf w\bigr)=0$, whence $\mathbf X^{(\alpha)}\cdot\mathbf w=0$ by Lemma~\ref{lem:fischer}(ii) and then $\mathbf w=\mathbf 0$. Consequently every $\mathbf u\in S_n^{(\alpha)}$ is of the form $T_G^{(\alpha)}[\mathbf G]$ with $(\Lapa)^2\mathbf G=\mathbf 0$, which is the fractal counterpart of the Boussinesq--Galerkin representation \cite{Galerkin1930,Mindlin1936}. The argument uses $\nua\neq\tfrac12$.
\end{remark}

\begin{remark}[Relation to the classical polynomial calculus]\label{rem:iso}
Let $\mathcal P$ denote the real polynomials in $(x_0,x_1,x_2)$ and let 
$\mathcal T:\mathcal P\to\mathcal P^{(\alpha)}$ be the linear bijection defined by 
\[
\mathcal T\Bigl[\dfrac{x_0^{p_0}}{p_0!} \dfrac{x_1^{p_1}}{p_1!} \dfrac{x_2^{p_2}}{p_2!}\Bigr] = m_{p_0}(x_0)m_{p_1}(x_1)m_{p_2}(x_2).
\]
Comparing the actions on basis monomials with rule~\eqref{rule:monomial} and Definition~\ref{def:X-operator}, one obtains 
\[
	\mathcal T\circ\partial_{x_i}=\Da_i\circ\mathcal T \qquad\text{and}\qquad \mathcal T\circ(x_i\,\cdot)=\Xa_i\circ\mathcal T.
\] 
Moreover $\mathcal T$ preserves the grading and commutes with real scalars and quaternionic units. Consequently, every identity between polynomial fields that is built from real linear combinations of compositions of the operators $\partial_{x_i}$ and multiplications by $x_i$ (in particular the operators $\Delta$, $\operatorname{grad}$, $\operatorname{div}$, $\overline\partial$, $\partial$, the Lam\'e operator, and the sets of harmonic and monogenic polynomials) has a counterpart in $\mathcal P^{(\alpha)}$ with the same coefficients. Theorem~\ref{thm:X-km} is thus the image under $\mathcal T$ of its classical case $\alpha=1$, which is a statement on homogeneous polynomial solutions of the classical Lam\'e system; its coefficients depend on $\alpha$ only through $\nua$. 

The map $\mathcal T$ is linear but not multiplicative: for functions of the same variable, the product in $\mathcal P^{(\alpha)}$ does not correspond to the pointwise product in $\mathcal P$ when $\alpha\neq1$. This is why multiplication by $x_i$ in the classical formulae corresponds to $\Xa_i$ and not to a pointwise product (Remark~\ref{rem:X-grading}). The proofs in this section use only the Weyl relation and Fischer duality and do not rely on $\mathcal T$.
\end{remark}

\section{An Example}
\label{sec:examples}

We illustrate Theorem~\ref{thm:X-km} {in the case} $n=2$ and $\Psi=0$. Let
\[
  G:=m_3(x_0)-m_1(x_0)\,m_2(x_1)\in H_3^{(\alpha)}.
\]
Indeed, $(\Da_0)^2G=m_1(x_0)$, $(\Da_1)^2G=-m_1(x_0)$ and $(\Da_2)^2G=0$, so that $\Lapa G=0$. The field
\[
  \Phi:=\dela G=\bigl[m_2(x_0)-m_2(x_1)\bigr]\mathbf e_0+m_1(x_0)\,m_1(x_1)\,\mathbf e_1
\]
is $\mathcal A$-valued and fractal-monogenic, because $\dbara\dela G=\Lapa G=0$. 

Using $\Xa_i[m_p(x_i)g]=(p+1)m_{p+1}(x_i)g$, {we have}
\[
  \sum_{i=0}^2\Xa_i\Phi_i=3\,m_3(x_0)-m_1(x_0)m_2(x_1)+2\,m_1(x_0)m_2(x_1)=3\,m_3(x_0)+m_1(x_0)\,m_2(x_1).
\]
{From \eqref{eq:km-fractal} it follows that}
\[
  2\mua\mathbf u=\bigl[(\kaa-3)\,m_2(x_0)-(\kaa+1)\,m_2(x_1)\bigr]\mathbf e_0+(\kaa-1)\,m_1(x_0)\,m_1(x_1)\,\mathbf e_1 .
\]
{Theorem~\ref{thm:X-km}(a) now gives $\mathbf u\in S_2^{(\alpha)}$, which can also viewed directly from} rule~\eqref{rule:monomial}: $\Lapa(2\mua\mathbf u)=\bigl[(\kaa-3)-(\kaa+1)\bigr]\mathbf e_0=-4\,\mathbf e_0$, while $2\mua\diva\mathbf u=2(\kaa-2)\,m_1(x_0)$ and hence $2\mua\grada\diva\mathbf u=2(\kaa-2)\,\mathbf e_0.$ Thus $\Lapa\mathbf u+\caa\grada\diva\mathbf u=\frac1{2\mua}\bigl(-4+2\caa(\kaa-2)\bigr)\mathbf e_0=\mathbf 0$ by \eqref{eq:constants}, for every $\alpha\in(0,1]$. For $\alpha=1$ {we arrive to} a classical quadratic solution of the Lam\'e system.

\section{Conclusions}
\label{sec:concl}

{We presented} a quaternionic formulation of linear elasticity based on Yang's local fractional derivative. {In addition,} we heralded suitable fractal kinematics, a constitutive law incorporating a material length scale, and the equilibrium equation lead to the fractal Lam\'e system, which is shown to be equivalent to the monogenicity of an explicit quaternionic field (Theorem~\ref{thm:Q}). 
On the polynomial sector, the classical Kolosov--Muskhelishvili representation admits a fractal counterpart (Theorem~\ref{thm:X-km} and Corollary~\ref{cor:mixed}): every pair of fractal-monogenic potentials generates a polynomial solution of the homogeneous Lam\'e system, and every such solution arises in this way. The proof is direct and relies on a corrected position operator $X_i^{(\alpha)}$, which satisfies the exact Weyl relation with $D_i^{(\alpha)}$ and replaces the position vector in the classical formula. 
The fractal Papkovich--Neuber representation is obtained as a corollary (Corollary~\ref{cor:PN}), and the Galerkin representation is discussed in a remark.

All of the above concerns the polynomial sector $\mathcal P^{(\alpha)}$. 
On this sector, the triple $(\mathcal P^{(\alpha)},D_i^{(\alpha)},X_i^{(\alpha)})$ is isomorphic 
to the classical polynomial calculus (Remark~\ref{rem:iso}); consequently, the coefficients in the representations are independent of $\alpha$, and the results constitute fractal counterparts of classical results on polynomial solutions of the Lam\'e system \cite{Bock2014,WeiszPatraultBockGurlebeck2014}. 
This isomorphism is linear but not multiplicative, which is precisely why the position operator $X_i^{(\alpha)}$, rather than pointwise multiplication by $x_i$, is required. The fractal parameter enters through the constitutive parameters $\lambda^{(\alpha)}$, $\mu^{(\alpha)}$, the length scale $\ell$, and the interpretation of the basis monomials $m_p$ as functions on Cantor-type sets.

Several questions remain open. Within the polynomial sector, these include explicit stress formulae, a normalization of the potentials that removes the redundancy in Theorem~\ref{thm:X-km} (as in the classical case \cite{Bock2014}), and a $\psi$-hyperholomorphic refinement that avoids 
this redundancy altogether. 
Beyond the polynomial sector, a representation theory for series built from $E_\alpha$, $\sin_\alpha$, and $\cos_\alpha$ would require a separate convergence analysis, since the normalizing factors $\Gamma(1+k\alpha)$ grow more slowly than $k!$ when $\alpha<1$. A genuine boundary-value theory 
would further require the specification of a domain, an appropriate function space, and local fractional integral representations of Borel--Pompeiu type on Cantor-type sets. 
Because the isomorphism with the classical polynomial calculus is not multiplicative, it does not extend automatically to variable coefficients or nonlinear constitutive laws, nor does it by itself relate $\lambda^{(\alpha)}$, $\mu^{(\alpha)}$, and $\ell$ to the microstructure of fractal media. {Both issues are the subject of ongoing work}.

\bigskip
\section*{Statements and Declarations}

\subsection*{Funding} 
This research is funded by University of Transport and Communications (UTC) under grant number T2027-CNTT-xxx.

\subsection*{Conflict of Interest} 
The authors declare that they have no known competing financial interests or personal relationships that could have appeared to influence the work reported in this paper.

\subsection*{Author Contributions} 
HMN conceived the study and wrote the paper. JBR reviewed the project. All authors provided critical feedback and helped shape the research, analysis, and manuscript.

\subsection*{ORCID} 
\noindent
Hung Manh Nguyen: \url{https://orcid.org/0009-0004-6503-6693}\\
Juan Bory-Reyes: \url{https://orcid.org/0000-0002-7004-1794}



\bigskip
\begin{thebibliography}{9}

\bibitem{AlvarezPena2013}
C.~\'Alvarez-Pe\~na,
\textit{Contragenic Functions and Appell Bases for Monogenic Functions
of Three Variables}, Ph.D.\ thesis, CINVESTAV del I.P.N., Mexico, 2013.

\bibitem{AlvarezPenaPorter2014}
C.~\'Alvarez-Pe\~na and R.~M.~Porter,
\textit{Contragenic functions of three variables}, Complex Anal.\
Oper.\ Theory \textbf{8} (2014), 409--427.

\bibitem{Balankin2015}
A.~S.~Balankin,
\textit{A continuum framework for mechanics of fractal materials I:
from fractional space to continuum with fractal metric},
Eur.\ Phys.\ J.\ B \textbf{88} (2015), 90.

\bibitem{BalankinBoryShapiro2016}
A. ~S. ~Balankin, J. ~Bory-Reyes and M. ~Shapiro,
\textit{Towards a physics on fractals: Differential vector calculus in
three-dimensional continuum with fractal metric}, Phys. A \textbf{444} (2016), 345-359.

\bibitem{Bock2014}
S.~Bock,
\textit{On monogenic series expansions with applications to linear
elasticity}, Adv.\ Appl.\ Clifford Algebr.\ \textbf{24} (2014), 931--943.

\bibitem{BockGurlebeck2009}
S.~Bock and K.~G\"urlebeck,
\textit{On a spatial generalization of the Kolosov--Muskhelishvili
formulae}, Math.\ Meth.\ Appl.\ Sci.\ \textbf{32} (2009), 223--240.

\bibitem{BockLegatiukNguyen2015}
S.~Bock, K.~G\"urlebeck, D.~Legatiuk and H.~M.~Nguyen,
\textit{$\psi$-hyperholomorphic functions and a Kolosov--Muskhelishvili
formula}, Math.\ Meth.\ Appl.\ Sci.\ \textbf{38} (2015), 5114--5123.

\bibitem{BoryNIDS2026}
{J.~Bory-Reyes, M.~A.~P\'erez-de la Rosa, J.~O.~Gonz\'alez-Cervantes and J.~E.~Napoles-Vald\'es,
\textit{A non-integer dimensional space approach to the Moisil--Teodorescu operator} (2026),
arXiv:2606.30350.}

\bibitem{BoryPerezPena2022}
{J.~Bory-Reyes, M.~A.~P\'erez-de la Rosa and Y.~Pe\~na-P\'erez,
\textit{Fractional Moisil--Teodorescu operator in elasticity and electromagnetism},
Alexandria Eng.\ J.\ \textbf{61} (2022), 6811--6818.}

\bibitem{BoryRosas2021}
J.~Bory-Reyes and M.~A.~P\'erez-de la Rosa,
\textit{Local fractional Moisil--Teodorescu operator in quaternionic
setting involving Cantor-type coordinate systems},
Math.\ Meth.\ Appl.\ Sci.\ \textbf{44} (2021), 605--616.

\bibitem{CarpinteriCornetti2002}
A.~Carpinteri and P.~Cornetti,
\textit{A fractional calculus approach to the description of stress
and strain localization in fractal media},
Chaos, Solitons \& Fractals \textbf{13} (2002), 85--94.

\bibitem{Galerkin1930}
B.~G.~Galerkin,
\textit{Contribution \`a la solution g\'en\'erale du probl\`eme de la
th\'eorie de l'\'elasticit\'e dans le cas de trois dimensions},
Comptes Rendus Acad.\ Sci.\ Paris \textbf{190} (1930), 1047--1048.

\bibitem{GurlebeckNguyen2013}
K.~G\"urlebeck and H.~M.~Nguyen,
\textit{On $\psi$-hyperholomorphic functions in $\mathbb R^3$},
AIP Conf.\ Proc.\ \textbf{1558} (2013), 496--501.

\bibitem{GurlebeckNguyen2014}
K.~G\"urlebeck and H.~M.~Nguyen,
\textit{On $\psi$-hyperholomorphic functions and a decomposition of
harmonics}, in: \textit{Trends in Mathematics: Hypercomplex Analysis:
New Perspectives and Applications}, Birkh\"auser, Basel, 2014, 181--189.

\bibitem{GurlebeckNguyenLegatiuk2014}
K.~G\"urlebeck, H.~M.~Nguyen and D.~Legatiuk,
\textit{An additive decomposition of harmonic functions in $\mathbb
R^3$}, in: Computational Science and Its Applications -- ICCSA 2014,
Lecture Notes in Comput.\ Sci.\ \textbf{8579}, Springer, 2014, 189--203.

\bibitem{KleinObbink1993}
B.~Klein Obbink,
\textit{On the solutions of $D^nD^mF$}, Reports on Applied and
Numerical Analysis, Eindhoven University of Technology, Department of
Mathematics and Computing Science, 1993.

\bibitem{Lazopoulos2019}
K.~A.~Lazopoulos and A.~K.~Lazopoulos,
\textit{Fractional Differential Calculus and Continuum Mechanics},
in G.~Voyiadjis (ed.),
\textit{Handbook of Nonlocal Continuum Mechanics for Materials and Structures},
Springer, Cham, 2019.

\bibitem{Legatiuk2021}
D.~Legatiuk and D.~Weisz-Patrault,
\textit{Coupling of complex function theory and finite element method for crack propagation through energetic formulation: conformal mapping approach and reduction to a Riemann-Hilbert problem},
Comput. Methods Funct. Theory. 22, 535–557 (2022).

\bibitem{Mindlin1936}
R.~D.~Mindlin,
\textit{Note on the Galerkin and Papkovitch stress functions},
Bull. Amer. Math. Soc. \textbf{42} (1936), no. 6, 373--376.

\bibitem{Muskhelishvili1977}
N.~I.~Muskhelishvili,
\textit{Some Basic Problems of the Mathematical Theory of Elasticity},
Springer, Dordrecht, 1977.

\bibitem{PatnaikSidhardhSemperlotti2020}
S.~Patnaik, S.~Sidhardh, and F.~Semperlotti,
\textit{Fractional-order models for the static and dynamic analysis of nonlocal plates},
Commun. Nonlinear Sci. Numer. Simul. \textbf{95} (2021), Paper No. 105601, 25 pp.

\bibitem{PatnaikSemperlotti2020}
S.~Patnaik and F.~Semperlotti,
\textit{A generalized fractional-order elastodynamic theory for non-local attenuating media},
Proc.\ R.\ Soc.\ A \textbf{476} (2020), 20200200.

\bibitem{RahmatBaleanuYang2015}
M.~R.~S.~Rahmat, D.~Baleanu and X.~J.~Yang,
\textit{Cantor-type spherical-coordinate method for differential equations
within local fractional derivatives},
in C.~Cattani, H.~M.~Srivastava and X.~J.~Yang (eds.),
\textit{Fractional Dynamics},
De Gruyter Open, Warsaw, 2015, 231--242.

\bibitem{SumelkaBlaszczyk2014}
W.~Sumelka and T.~B{\l}aszczyk,
\textit{Fractional continua for linear elasticity},
Arch.\ Mech.\ \textbf{66} (2014), 147--172.

\bibitem{VecCalc2025}
J.~J.~Gohil, K.~B.~Kachhia, and K.~M.~Kolwankar,
\textit{On local fractional vector calculus for fields in fractal space},
Physics Letters A \textbf{556} (2025), 130830.

\bibitem{WeiszPatraultBockGurlebeck2014}
D.~Weisz-Patrault, S.~Bock and K.~G\"urlebeck,
\textit{Three-dimensional elasticity based on quaternion-valued
potentials}, Int.\ J.\ Solids Struct.\ \textbf{51} (2014), 3422--3430.

\bibitem{Yang2015}
X.~J.~Yang, D.~Baleanu and H.~M.~Srivastava,
\textit{Local Fractional Integral Transforms and Their Applications},
Academic Press, Amsterdam, 2015.

\bibitem{Yang2012book}
X.~J.~Yang,
\textit{Advanced Local Fractional Calculus and Its Applications},
World Science Publisher, New York, 2012.

\end{thebibliography}
\end{document}